\documentclass[12pt]{amsart}
\usepackage{amssymb,latexsym}
\usepackage{enumerate}

\makeatletter
\@namedef{subjclassname@2010}{%
  \textup{2010} Mathematics Subject Classification}
\makeatother



\newtheorem{thm}{Theorem}[section]
\newtheorem{cor}[thm]{Corollary}

\newtheorem{pro}[thm]{Proposition}



\theoremstyle{definition}
\newtheorem{defin}[thm]{Definition}
\newtheorem{rem}[thm]{Remark}
\newtheorem{exa}[thm]{Example}



\numberwithin{equation}{section}


\frenchspacing

\textwidth=17.5cm \textheight=23cm
\parindent=16pt
\oddsidemargin=-0.5cm \evensidemargin=-0.5cm \topmargin=-0.5cm


\begin{document}
\begin{center}
{\bf \Large Convexity properties related \\
\vskip 5mm
to extremal functions}
\vskip 10mm
{by \ {\sc Miros\l aw Baran}$^*$ \ and \ {\sc Leokadia Bialas-Ciez}$^{**}$ \
}

\vskip 9mm
$^*$ Pedagogical  University, Faculty of Mathematics, Physics and Technical Sciences
\\  30-084 Krak\'ow, Podchor\c{a}\.zych 2, POLAND \\ {\it e-mail}: miroslaw.baran.tarnow@gmail.com
\vskip 2mm
$^{**}$Jagiellonian University, Faculty of Mathematics and Computer Science\\
30-348 Krak\'ow, \L ojasiewicza 6, POLAND\\
{\it e-mail}: leokadia.bialas-ciez@uj.edu.pl
\end{center}

\vskip 15mm \noindent {\small \bf Abstract.} {\small In 1962 J\'ozef Siciak introduced in Transactions of the AMS \cite{SiciakTransactions} his famous polynomial extremal function, which was intensively investigated and applied in pluripotential theory and polynomial approximations related to the Chebyshev norm on the compacts in $\mathbb{C}^N$. In particular, starting from middle seventies the Siciak extremal function was one of the most important tool to investigate the behavior of derivatives of polynomials. The pioneer was Wies\l{}aw Ple\'sniak in his researches of quasianalytic functions in the sense of Bernstein. In the circle of papers (most important joint with Wies\l{}aw Paw\l{}ucki) there were shawn deep connections between behavior Siciak extremal function near compact $K$ and bounds for derivatives of polynomials. In particular, in 1990 W. Ple\'sniak \cite{Pl90}\ introduced condition (P) which is equivalent to Markov property of compact $K$. In the same paper there was stated a problem which property of Siciak's extremal function are necessary to Markov's property. In particular, thus Markov sets are non pluripolar that is Siciak's extremal function is finite at every point. Much more  stronger question is on H\"older continuity of the logarithm of the Siciak extremal function, which plays a role of the pluricomplex Green function (see \cite{Klimek} for excelent presentation). This problem can be formulate in more general case of arbitrary norms $q$ on the space of polynomials. In the present paper we, continuing our earlier researches, investigate the connection between behavior of  generalizations of Siciak's function and the behavior of norms of derivatives of polynomials. In particular we get some deep properties of Markov factors $M_n(q,k)$ related to the main problems. One of the main result is the Kolmogorov-Landau type property of $M_n(q,k)^{1/k}$ which is a condition on the triangle sequence of family of derivatives of polynomials not for particular polynomials as for direct analogons of the Kolmogorov-Landau remarkable inequalities: $\log M_n(q,k)^{1/k}\leq \log const. + (1-\frac{\log k}{\log n})\log M_n(q,1)+\frac{\log k}{\log n}M_n(q,n)^{1/n},\ 1\leq k\leq n$.It seems that this condition is satisfied for arbitrary norm $q$. Separately this condition (a weaker version is sufficient) gives nothing. But if we assume that $q$ has A. Markov's property with respect to $q$ and satisfies a condition $C(q)>0$ then $q$ posseses Vladimir Markov property. In the case $q(P)=||P||_E$ this means that non pluripolar Markov sets possese H\"older continuous pluricomplex Green function (in the one dimensional case Markov sets are not polar \cite{Ada}). This is presented in last section. Earlier we investigate a number of extremal functions, between them related to Ple\'sniak condition and to V. Markov's property. We shall consider mainly one dimensional case, but there is no problem to generalize for many variables.}

\vskip 10mm

\noindent {\small {\bf Key words:} Green function, capacity, Chebyshev constant, Markov inequality, Ple\'sniak property.

\vskip 3mm
\noindent {\bf AMS classification}: 32U35, 32U20, 41A17.}

\vskip 13mm

\section{Introduction.} The vector space of polynomials of $N$ variables with complex coefficients we shall denote by $\mathbb{P}(\mathbb{C}^N)$. Then 
$\mathbb{P}_n(\mathbb{C}^N)=\{ P\in\mathbb{P}(\mathbb{C}^N):\ \deg P\leq n\}$. If we consider a norm $q(P)=||P||$ in $\mathbb{P}(\mathbb{C}^N)$ we shall get a normed space $X_q=(\mathbb{P}(\mathbb{C}^N),q)$ and finite dimensional spaces $X_{q,n}=(\mathbb{P}_n(\mathbb{C}^N),q)$ with the dual $X_{q,n}^*$. Thus, as it is well known, $q(P)=\sup\{|\Lambda (P)|:\ \Lambda\in X_{q,n}^*,\ ||\Lambda ||^*=1\}$.

A main motivation of this paper and a lot of earlier researches is to get  bounds of partial derivatives of polynomials in spaces $X_{q,n}$ and to investigate them. We can consider a bound for $|\Lambda (D^\alpha P)|$, where $\Lambda\in X_{q,n}^*$ or a supremum $|\Lambda (D^\alpha P|)$, $\Lambda\in \mathcal{A}\subset X_{q,n}^*$, where $\mathcal{A}$ is a bounded set. In particular we shall consider $||D^\alpha P||$. 

A basic observation is an obvious fact
$$P(z+\zeta )=\sum\limits_{|\alpha|\leq\deg P}\frac{1}{\alpha!}D^\alpha P(z)\zeta^\alpha ,$$ where as usually $\alpha!=\alpha_1!\cdots\alpha_N!,\ \zeta^\alpha=\zeta_1^{\alpha_1}\cdots \zeta_N^{\alpha_N}$.

Next step is a choice of a norm in $\mathbb{C}^N$, consider the unit ball $\mathbb{B}$ with respect to this norm and next we can take a Borel probabilistic measure which is supported on $\partial_s\mathbb{B}$ the Shilov boundary of $\mathbb{B}$.  Actually there is well known that the complex equilibrum measure $\mu_{\mathbb{B}}=(2\pi)^{-N}(dd^cV_{\mathbb{B}})^N$ has this property.

We shall present a few examples.  To do this let us recall some standard notations. The unit disk in $\mathbb{C}$ is $\mathbb{D}$, the unit ciricle is $\mathbb{T}$, $\mathbb{D}_N$ is the polidisk in $\mathbb{C}^N$, while $\mathbb{T}^N$ is the $N$ dimensional tori, which is equal $\mathbb{T}^N=\text{\rm extr}(\mathbb{D}_N)$. By $\mathbb{B}_N$ is denoted the unit Euclidean ball (with respect to the standard inner product), $\mathbb{S}_N=\partial\mathbb{B}_N=\text{\rm extr}(\mathbb{B}_N)$.
\medskip

\begin{exa} $||z||=||z||_\infty=\max(|z_1|,\dots ,|z_N|)$,\ $\mathbb{B}=\mathbb{D}_N,\ \partial_s\mathbb{B}=\partial_s\mathbb{D}_N=\mathbb{T}^N$,
$$\mu_{\partial_s\mathbb{B}}=d\sigma_1\cdots d\sigma_N,$$ where $d\sigma_j$ is the normalized arclength mesure on $\mathbb{T}$, that is 
$$\int_{\mathbb{C}^N}\varphi(z)d\mu_{\partial_s\mathbb{B}}(z)=\left(\frac{1}{2\pi}\right)^N\int_{[0,2\pi ]^N}\varphi (e^{i\theta_1},\dots,e^{i\theta_N})d\theta_1\dots d\theta_N.$$
Now we have (equivalently Cauchy integral formula can be used)
$$\left(\frac{1}{2\pi}\right)^N\int_{[0,2\pi ]^N}P(z+(r_1e^{i\theta_1},\dots r_Ne^{i\theta_N}))e^{-i\theta\cdot\alpha}d\theta =r_1^{\alpha_1}\cdots\ r_N^{\alpha_N}\frac{1}{\alpha!}D^{\alpha }P(z).$$
Hence 
$$D^{\alpha }P(z)=\alpha!r_1^{-\alpha_1}\cdots\ r_N^{-\alpha_N}\left(\frac{1}{2\pi}\right)^N\int_{[0,2\pi ]^N}P(z+(r_1e^{i\theta_1},\dots r_Ne^{i\theta_N}))e^{-i\theta\cdot\alpha}d\theta ,$$
$$|\Lambda (D^{\alpha }P(z))|\leq \alpha!r_1^{-\alpha_1}\cdots\ r_N^{-\alpha_N}\left(\frac{1}{2\pi}\right)^N\int_{[0,2\pi ]^N}|\Lambda (P(z+(r_1e^{i\theta_1},\dots r_Ne^{i\theta_N})))|d\theta ,$$
$$||D^\alpha P(z)||\leq \alpha!r_1^{-\alpha_1}\cdots\ r_N^{-\alpha_N}\left(\frac{1}{2\pi}\right)^N\int_{[0,2\pi ]^N}||P(z+(r_1e^{i\theta_1},\dots r_Ne^{i\theta_N}))||d\theta ,$$
$$||D^\alpha P(z)||\leq \alpha!r_1^{-\alpha_1}\cdots\ r_N^{-\alpha_N}\left(\left(\frac{1}{2\pi}\right)^N\int_{[0,2\pi ]^N}||P(z+(r_1e^{i\theta_1},\dots r_Ne^{i\theta_N}))||^pd\theta \right)^{1/p},\ p\geq 1.$$ In particular

$$||D^\alpha P(z)||\leq  \alpha!r_1^{-\alpha_1}\cdots\ r_N^{-\alpha_N}\max\limits_{\theta\in[0,2\pi ]^n}||P(z+(r_1e^{i\theta_1},\dots r_Ne^{i\theta_N}))||$$
$$\leq \alpha!r_1^{-\alpha_1}\cdots\ r_N^{-\alpha_N}\varphi_n (q,(r_1,\dots ,r_N))||P(z)||,$$ where 
$$ \varphi_n (q,(r_1,\dots ,r_N)):=\sup\{||P(z+\zeta)||:\ |\zeta_1|\leq r_1,\dots,\ |\zeta_N|\leq r_N,\ \deg P\leq n,||P(z)||\leq 1\}$$
$$=\sup\{|\Lambda (P(z+\zeta))|:\ \Lambda\in X_{q,n}^*,||\Lambda||^*=1,\ |\zeta_1|\leq r_1,\dots,\ |\zeta_N|\leq r_N,\ \deg P\leq n,||P(z)||\leq 1\}.$$
Simirally, if $\Lambda\in X_{q,n}^*,||\Lambda||^*=1$ then we put 
$$\varphi_n(q,\Lambda ,(r_1,\dots ,r_n)):=\sup\{|\Lambda (P(z+\zeta))|:\ |\zeta_1|\leq r_1,\dots,\ |\zeta_N|\leq r_N,\ \deg P\leq n,||P(z)||\leq 1\}.$$ Thus
$$\varphi_n (q,(r_1,\dots ,r_N))=\sup\{\varphi_n(q,\Lambda ,(r_1,\dots ,r_n)):\  \Lambda\in X_{q,n}^*,||\Lambda||^*=1\}.$$ Finally, if $1\leq p\leq\infty$ then we define
$\varphi_n(q,p,\Lambda ,(r_1,\dots ,r_n))$ by
$$\sup\left(\left(\frac{1}{2\pi}\right)^N\int_{[0,2\pi ]^N}|\Lambda (P(z+(r_1e^{i\theta_1},\dots,r_Ne^{i\theta_N})|^pd\theta \right)^{1/p}$$ and $\varphi_n(q,p,(r_1,\dots ,r_n))$
to being equal to
$$\sup\limits_{||\Lambda||^*=1}\left(\left(\frac{1}{2\pi}\right)^N\int_{[0,2\pi ]^N}|\Lambda(P(z+(r_1e^{i\theta_1},\dots,r_Ne^{i\theta_N}))|^pd\theta \right)^{1/p}$$
$$\leq \sup\left(\left(\frac{1}{2\pi}\right)^N\int_{[0,2\pi ]^N}||P(z+(r_1e^{i\theta_1},\dots,r_Ne^{i\theta_N})||^pd\theta \right)^{1/p},$$
 where in both cases the supremum is taken over all polynomials $P$ with 
$1\leq \deg P\leq n,||P(z)||\leq 1$. A specially important is the case $p=2$.
\end{exa}

\begin{exa} Now let $||z||=||z||_2=\left(|z_1|^2+\dots +_|z_N|^N\right)^{1/2}$,\ $\mathbb{B}=\mathbb{B}_N,\ \partial_s\mathbb{B}=\mathbb{S}_N=S^{2N-1}$, $B_N=\{x\in\mathbb{R}^N:\ ||x||_2\leq 1\}=\mathbb{B}_N\cap\mathbb{R}^N$, $B_N^+=\{z\in B_N:\ x_j\geq 0,\ j=1,\dots ,N\}$.
$$\mu_{\mathbb{B}_N}=d\sigma,$$ where $d\sigma$ is the normalized surfaces  mesure on $\mathbb{S}_N$ ($|\mathbb{S}_N|=|S^{2N-1}|=2\pi^N/(N-1)!$), that is 
$$\int_{\mathbb{C}^N}\varphi(z)d\mu_{\mathbb{B}_N}(z)=$$
$$\frac{(N-1)!}{2\pi^N}\int_{[0,2\pi ]^N}\int_{S^{N-1}_+}\varphi (\rho_1e^{i\theta_1},\dots,\rho_Ne^{i\theta_N})\rho_1\cdots\rho_Nd\rho d\theta_1\dots d\theta_N.$$
Here $S^{N-1}_+=\{x\in S^{N-1}:\ x_j\geq 0,\ j=1,\dots, N\}$, $d\rho$ is the standard surface measure on $S^{N-1}$. Let us recall that 
$$\int_{S^{N-1}_+}f(\rho_1,\dots,\rho_N)d\rho =\int_{B_N^+}\frac{f(\rho_1,\dots,\rho_{N-1},\sqrt{1-\rho_1^2-\dots-\rho_{N-1}^2})}{\sqrt{1-\rho_1^2-\dots-\rho_{N-1}^2}}d\rho_1\dots d\rho_{N-1}.$$
Now we can write
$$\frac{1}{|S^{2N-1}|}\int_{S^{2N-1}}P(z+r\eta )(\eta_1/|\eta_1|)^{-\alpha_1}\cdots(\eta_N/|\eta_N|)^{-\alpha_N}d\sigma(\eta)=$$
$$\frac{1}{S^{2N-1}}\int_{[0,2\pi]^N}\int_{B_{N-1}^+}P(z+r(\rho_1e^{i\theta_1},\dots,\rho_{N-1}e^{i\theta_{N-1}},(1-\rho_1^2-\dots\rho_{N-1}^2)^{1/2}e^{i\theta_N}))$$
$$\rho_1\cdots\rho_{N-1}e^{-i\theta\cdot\alpha}d\rho_1\cdots\rho_{N-1}d\theta_1\cdots d\theta_N$$
$$=\frac{1}{2}r^{|\alpha |}\frac{1}{\binom{N-1+|\alpha|/2}{N-1}}\frac{\Gamma (\alpha_1/2)\cdots\Gamma(\alpha_N/2)}{\Gamma (|\alpha|/2)}D^\alpha P(z),$$ where
$$\chi_{\alpha}(\rho_1,\dots,\rho_{N-1})=\rho_1^{\alpha_1+1}\cdots\rho_{N-1}^{\alpha_{N-1}+1}(1-\rho_1^2-\dots -\rho_{N-1}^2)^{\alpha_N/2}.$$
Hence

$$D^\alpha P(z)=2r^{-|\alpha|}\binom{N-1+|\alpha|/2}{N-1}\frac{\Gamma(|\alpha|/2)}{\Gamma (\alpha_1/2)\cdots\Gamma(\alpha_N/2)}\cdot $$
$$\frac{1}{|S^{2N-1}|}\int_{S^{2N-1}}P(z+r\eta )(\eta_1/|\eta_1|)^{-\alpha_1}\cdots(\eta_N/|\eta_N|)^{-\alpha_N}d\sigma(\eta)$$ and 
$$\left|\Lambda (D^\alpha P(z))\right|\leq 2r^{-|\alpha|}\binom{N-1+|\alpha|/2}{N-1}\frac{\Gamma(|\alpha|/2)}{\Gamma (\alpha_1/2)\cdots\Gamma(\alpha_N/2)}\frac{1}{|S^{2N-1}|}\int_{S^{2N-1}}|\Lambda(P(z+r\eta ))|d\sigma(\eta)$$

$$\leq 2r^{-|\alpha|}\binom{N-1+|\alpha|/2}{N-1}\frac{\Gamma(|\alpha|/2)}{\Gamma (\alpha_1/2)\cdots\Gamma(\alpha_N/2)}\varphi_n(\mathbb{B}_N, q,\Lambda ,r),$$

$$\left\|D^\alpha P(z)\right\|\leq 2r^{-|\alpha|}\binom{N-1+|\alpha|/2}{N-1}\frac{\Gamma(|\alpha|/2)}{\Gamma (\alpha_1/2)\cdots\Gamma(\alpha_N/2)}\frac{1}{|S^{2N-1}|}\int_{S^{2N-1}}||P(z+r\eta )||d\sigma(\eta),$$
$$\leq 2r^{-|\alpha|}\binom{N-1+|\alpha|/2}{N-1}\frac{\Gamma(|\alpha|/2)}{\Gamma (\alpha_1/2)\cdots\Gamma(\alpha_N/2)}\varphi_n(\mathbb{B}_N, q,r),$$ where

$$ \varphi_n (\mathbb{B}_N, q,r):=\sup\{||P(z+\zeta)||:\ \zeta\in r\mathbb{B}_N,\ \deg P\leq n,||P(z)||\leq 1\},$$
$$ \varphi_n (\mathbb{B}_N, q,\Lambda,r):=\sup\{|\Lambda(P(z+\zeta))|:\ \zeta\in r\mathbb{B}_N,\ \deg P\leq n,||P(z)||\leq 1\}.$$
\end{exa}

\begin{rem} If $q$ is the supremum norm with respect to a compact $K\subset\mathbb{C}^N$ then in definitions of $\varphi_n$ we shall replace $q$ by $K$ and if $z_0\in K,\Lambda (P(z))=P(z_0)$, then we shall replace $\Lambda$ by $z_0$.
\end{rem}

\begin{exa}

\end{exa} Let $K=\mathbb{D},z_0\in\mathbb{T},\ \Lambda (P(z))=P(z_0),\ ||P||_{\mathbb{D}}=1,\ 1\leq \deg P\leq n$. Then
$$\int_0^{2\pi}|P(z+re^{i\theta})|^2\frac{d\theta}{2\pi}=\sum\limits_{k=1}^n\left(\frac{1}{k!}\right)^2|P^{(k)}(z_0)|^2r^{2k}\leq\sum\limits_{k=0}^n\left(\frac{1}{k!}\right)^2(n(n-1)\cdots (n-k+1))^2r^{2k}$$
$$=\sum\limits_{k=0}^n\binom{n}{k}^2r^{2k}$$ with equality if $P(z)=z^n$. (Here we use Bernstein inequality for derivative of polynomials on the unit circle). Hence
$$\varphi_n(\mathbb{D},z_0,2,r)=\left(\sum\limits_{k=0}^{n}\binom{n}{k}^2r^{2k}\right)^{1/2}$$ and 
$$\varphi_n(\mathbb{D},\mathbb{D},2,r)=\left(\sum\limits_{k=0}^{n}\binom{n}{k}^2r^{2k}\right)^{1/2}\leq (1+r)^n.$$ Moreover, 
$$\inf\limits_{r>0}r^{-l}\varphi_n(\mathbb{D},\mathbb{D},2,r)\leq \left(1/\binom{n}{l}^{1/l}\right)^{-l}\left(\sum\limits_{k=0}^{n}\binom{n}{k}^2(1/\binom{n}{l}^{1/l})^{2k}\right)^{1/2}$$
$$\leq \binom{n}{l}\left(1+1/\binom{n}{l}^{1/l}\right)^n\leq \binom{n}{l}(1+l/n)^n\leq e^l\binom{n}{l}.$$ Therefore 
$$\binom{n}{l}\leq \inf\limits_{r>0}r^{-l}\varphi_n(\mathbb{D},\mathbb{D},2,r)\leq e^l\binom{n}{l}.$$
\begin{exa}
Let $K=[-1,1],z_0\in[-1,1],\ \Lambda (P(z))=P(z_0),\ ||P||_{[-1,1]}=1,\ 1\leq \deg P\leq n$.
$$\int_0^{2\pi}|P(z_0+re^{i\theta})|^2\frac{d\theta}{2\pi}=\sum\limits_{k=1}^n\left(\frac{1}{k!}\right)^2|P^{k}(z_0)|^2r^{2k}\leq\sum\limits_{k=0}^n\left(\frac{1}{k!}\right)^2T_n^{(k)}(1)^2r^{2k}=\sum\limits_{k=0}^n(T_n^{(k)}(1)/k!)^2r^{2k}.$$ 
Hence 
$$\sup\left\{\int_0^{2\pi}|P(z_0+re^{i\theta})|^2\frac{d\theta}{2\pi},\ z_0\in [-1,1]\right\}\leq \sum\limits_{k=0}^n(T_n^{(k)}(1)/k!)^2r^{2k}$$ with equality for $P(z)=T_n(z)$. This gives equality
$$\varphi_n(\mathbb{D},[-1,1],2,r)=\left(\sum\limits_{k=0}^n(T_n^{(k)}(1)/k!)^2r^{2k}\right)^{1/2}\leq T_n(1+r).$$ Moreover,
$$\inf\limits_{r>0}r^{-l}\varphi_n(\mathbb{D},[-1,1],2,r)$$
$$\leq \left(1/(T_{n}^{(l)}(1)/l!)^{1/l}\right)^{-l}\left(\sum\limits_{k=0}^n(T_n^{(k)}(1)/k!)^2(1/(T_{n}^{(l)}(1)/l!)^{1/l})^{2k}\right)^{1/2}$$
$$\leq \frac{T_{n}^{(l)}(1)}{l!}T_n\left(1+(1/(T_{n}^{(l)}(1)/l!)^{1/l}\right)=\frac{T_{n}^{(l)}(1)}{l!}g\left(h\left(1+(1/(T_{n}^{(l)}(1)/l!)^{1/l}\right)^n\right)$$
$$=\frac{T_{n}^{(l)}(1)}{l!}g\left(h^n\left(1+\left(1/n2^l\binom{n+l-1}{2l}\right)^{1/l}\right)\right)$$
$$\leq \frac{T_{n}^{(l)}(1)}{l!}\left(1+(\sqrt{2}+\sqrt{6})\frac{l}{n+2l-1}\right)^n\leq e^{(\sqrt{2}+\sqrt{6})l}\frac{T_{n}^{(l)}(1)}{l!}.$$ Therefore 
$$\frac{T_{n}^{(l)}(1)}{l!}\leq \inf\limits_{r>0}r^{-l}\varphi_n(\mathbb{D},[-1,1],2,r)\leq e^{(\sqrt{2}+\sqrt{3})l}\frac{T_{n}^{(l)}(1)}{l!}.$$
\end{exa}
\begin{rem} In two above examples we have obtained the following.

Let $M_n(K,l):=\sup\{||P^{(l)}||_K:\ \deg P\leq n,||P||_K=1\}$. Then 
$$\varphi_n(\mathbb{D},K,2,r)\leq \left(\sum\limits_{k=0}^n\left(\frac{M_n(K,k)}{k!}\right)^2r^{2l}\right)^{1/2},$$
$$\frac{M_n(K,l)}{l!}\leq \inf\limits_{r>0}r^{-l}\varphi_n(\mathbb{D},K,2,r)\leq e(K)^l\frac{M_n(K,l)}{l!}$$ with
$$e(K)\leq \sup\limits_{n\geq  1}\sup\limits_{1\leq l\leq n}\left(\sum\limits_{k=0}^n\left(\frac{M_n(K,k)^{1/k}}{M_n(K,l)^{1/l}}\frac{(l!)^{1/l}}{(k!)^{1/k}}\right)^{2k}\right)^{1/2l}$$
$$\leq  \sup\limits_{n\geq  1}\sup\limits_{1\leq l\leq n}\left(\sum\limits_{k=0}^n\left(\frac{M_n(K,k)^{1/k}}{M_n(K,l)^{1/l}}\frac{(l!)^{1/l}}{(k!)^{1/k}}\right)^{k}\right)^{1/l}$$
\end{rem}
\begin{rem}
We can  repeat constructions from the above examples by considering another norms in $\mathbb{C}^N$, for example $||z||_p=\left(|z_1|^p+\dots+|z_N|^p\right)^{1/p}$. We shall consider below the  general case.
\end{rem}
\begin{defin} Fix  a norm $q(P)=||P||$ in $\mathbb{P}(\mathbb{C}^N)$, a circular and absorbing set $\mathbb{B}\subset\mathbb{C}^N$ (for any compact $C$ there exists an $r>0$ such that $K\subset [0,r]\mathbb{B}$) and a linear functional $\Lambda\in X_{q,n}^*$ with $||\Lambda||^*=1$. Then for any $r\geq 0$ define 

$$\varphi_n(\mathbb{B},q,\Lambda ,r):=\sup\{ |\Lambda (P(z+\zeta))|:\ \zeta\in r\mathbb{B},\deg P\leq n,||P(z)||\leq 1\},$$

$$\varphi(\mathbb{B},q,\Lambda ,r):=\sup\limits_{n\geq 1}\varphi_n(\mathbb{B},q,\Lambda ,r)^{1/n},$$
$$\ v(\mathbb{B},q,\Lambda ,r):=\log \varphi(\mathbb{B},q,\Lambda ,r),\  u(\mathbb{B},q,\Lambda ,t):= v(\mathbb{B},q,\Lambda ,e^t),t\in\mathbb{R},$$

$$\varphi_n(\mathbb{B},q,r):=\sup\{ ||P(z+\zeta)||:\ \zeta\in r\mathbb{B},\deg P\leq n,||P(z)||\leq 1\},$$

$$\varphi(\mathbb{B},q,r):=\sup\limits_{n\geq 1}\varphi_n(\mathbb{B},q,r)^{1/n},$$
$$v(\mathbb{B},q,r):=\log \varphi(\mathbb{B},q,r),\ u(\mathbb{B},q,t):=v(\mathbb{B},q,e^t),t\in\mathbb{R}.$$
\end{defin}

\begin{defin} If a norm $q(P)=||P||$ in $\mathbb{P}(\mathbb{C}^N)$ is fixed then define
$$M_n(q,\alpha ):=\sup\{ ||D^{\alpha} P||:\ \deg P\leq n,\ ||P||=1\},\ $$
$$e(q):=\sup\limits_{n\geq  1}\sup\limits_{1\leq |\alpha|\leq n}\left(\sum\limits_{|\beta|\leq n}\left(\frac{M_n(q,\beta)^{1/|\beta|}}{M_n(q,\alpha)^{1/|\alpha|}}\frac{(\alpha!)^{1/|\alpha|}}{(\beta!)^{1/|\beta |}}\right)^{|\beta|}\right)^{1/|\alpha|}.$$ 
\end{defin}
\medskip
\begin{rem} \begin{itemize} \ \\

\item If we define $ M_n^\bullet(q,k):=\sup\{\|D^\alpha P\|^{1/|\alpha|} \ : \ |\alpha|\le k, \ \deg P\le n, \ \|P\|=1\}$ then one cane easily check that $  M^\bullet_n(q,k) = \max_{j=1,...,N} M_n(q,e_j) .$

\item If $e(q)<\infty$ then 
$$\max\limits_{|\alpha |=k}\left(\frac{M_n(q,\alpha)}{\alpha!}\right)^{1/k}\leq \inf\limits_{r>0}r^{-k}\varphi_n(\mathbb{B}_N,q,r)\leq e(q)^{k}\max\limits_{|\alpha |=k}\left(\frac{M_n(q,\alpha)}{\alpha!}\right)^{1/k}.$$ Let us note that in the case $q(P)=||P||_{\mathbb{D}\cup\{2\}}$ we have $e(q)=\infty$. Thus condition $e(q)<\infty$ gives a some restriction. Here we can ask that condition $e(q)<\infty$ is equivalent to exits a constant $C$ such that $\max\limits_{|\alpha |=k}\left(\frac{M_n(q,\alpha)}{\alpha!}\right)^{1/k}\leq \inf\limits_{r>0}r^{-k}\varphi_n(\mathbb{B}_N,q,r)\leq C^{k}\max\limits_{|\alpha |=k}\left(\frac{M_n(q,\alpha)}{\alpha!}\right)^{1/k}.$ 

The second question is that condition $\sup\limits_{n\geq 2} \max\limits_{1\leq j\leq N}\log \frac{M_n(q,e_j)}{\log n}<\infty$ is necessary to satisfy the condition $e(q)<\infty$.
\end{itemize}

\end{rem}
\begin{pro} Let $\mathbb{B}_1$ and $\mathbb{B}_2$ be two unit closed balls in $\mathbb{C}^N$
such that $A_1\mathbb{B}_1\subset \mathbb{B}_2\subset A_2\mathbb{B}_1$. Then
$$A_1^k\inf\limits_{r>0}r^{-k}\varphi_n(\mathbb{B}_1,q,r)\leq \inf\limits_{r>0}r^{-k}\varphi_n(\mathbb{B}_2,q,r)\leq A_2^k\inf\limits_{r>0}r^{-k}\varphi_n(\mathbb{B}_1,q,r).$$
\end{pro}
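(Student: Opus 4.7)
The plan is to reduce everything to the monotonicity of $\varphi_n(\mathbb{B},q,r)$ in the ball $\mathbb{B}$ and a scaling substitution inside the infimum. The definition of $\varphi_n(\mathbb{B},q,r)$ is a supremum over $\zeta\in r\mathbb{B}$, so if $\mathbb{B}\subset\mathbb{B}'$ then $r\mathbb{B}\subset r\mathbb{B}'$ and the supremum on the larger set is at least as big. In particular, the two hypotheses $A_1\mathbb{B}_1\subset\mathbb{B}_2$ and $\mathbb{B}_2\subset A_2\mathbb{B}_1$ translate, for every $r>0$, into
\[
\varphi_n(\mathbb{B}_1,q,A_1 r)\leq \varphi_n(\mathbb{B}_2,q,r)\leq \varphi_n(\mathbb{B}_1,q,A_2 r),
\]
since $rA_1\mathbb{B}_1\subset r\mathbb{B}_2\subset rA_2\mathbb{B}_1$ and the defining family of polynomials is the same in all three cases.

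Next I would multiply this chain by $r^{-k}$ and take $\inf_{r>0}$. For the right-hand side I substitute $s=A_2 r$ so that $r^{-k}=A_2^k s^{-k}$; the change of variables is a bijection of $(0,\infty)$ to itself, hence
\[
\inf_{r>0}r^{-k}\varphi_n(\mathbb{B}_1,q,A_2 r)=A_2^k\inf_{s>0}s^{-k}\varphi_n(\mathbb{B}_1,q,s),
\]
which gives the upper bound of the proposition. For the left-hand side I do the same with $s=A_1 r$, producing the factor $A_1^k$ and the lower bound. Combining yields exactly the two-sided estimate in the statement.

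There is no real obstacle here beyond bookkeeping: the argument only uses the circularity of $\mathbb{B}_1,\mathbb{B}_2$ implicitly (through $rA_j\mathbb{B}_1$ being the ball of radius $rA_j$), set-theoretic inclusion of the admissible $\zeta$-sets, and the scale-invariance $\{A\mathbb{B}: A>0\}$ of the family used to form $\varphi_n$. The only thing worth pausing on is to confirm that the infima are well-defined (finite for some $r$, not vacuously $0$), but this is automatic from the definition: $\varphi_n(\mathbb{B},q,r)$ is finite for every $r>0$ because we are taking a supremum over a finite-dimensional family of polynomials with $\|P(z)\|\le 1$, and $r^{-k}\varphi_n(\mathbb{B},q,r)\to\infty$ as $r\to 0^+$ and $r\to\infty$, so the infimum is attained on some bounded interval. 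Thus the entire proof fits in a few lines of change-of-variables, and no deeper property of $q$ or of the balls is needed.
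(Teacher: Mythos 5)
Your argument is correct: the inclusion $rA_1\mathbb{B}_1\subset r\mathbb{B}_2\subset rA_2\mathbb{B}_1$ gives the pointwise chain $\varphi_n(\mathbb{B}_1,q,A_1r)\leq\varphi_n(\mathbb{B}_2,q,r)\leq\varphi_n(\mathbb{B}_1,q,A_2r)$ by monotonicity of the supremum in the $\zeta$-set, and the substitution $s=A_jr$ inside $\inf_{r>0}r^{-k}(\cdot)$ produces exactly the factors $A_j^k$; note the paper states this proposition without any proof, so yours fills that gap with what is surely the intended argument. Two minor remarks: circularity of the balls is not actually used (only the identity $r(A\mathbb{B})=(rA)\mathbb{B}$ for positive scalars), and the closing paragraph about attainment of the infimum is unnecessary, since the inequalities between the three functions of $r$ pass to their infima regardless of whether these are attained or even finite.
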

\medskip

Applying arguments from \cite{BaranCiez1} one can prove the following important facts.

\begin{thm} The following functions are convex functions on $\mathbb{R}$ (possibly some of them are equal to $+\infty$):
 $$\log\varphi_n(\mathbb{B},q,\Lambda,e^t),\ \log\varphi (\mathbb{B},q,\Lambda,e^t),\ \log\varphi_n(\mathbb{B},q,e^t),\ \log\varphi (\mathbb{B},q,e^t).$$

\end{thm}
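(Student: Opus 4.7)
The plan is to deduce all four convexity statements from one Hadamard three-circles argument applied slice-wise through the origin. For fixed $P\in\mathbb{P}_n(\mathbb{C}^N)$, fixed base point $z$, and fixed $\Lambda\in X_{q,n}^*$, the map
$$F_{P,\Lambda}(\zeta):=\Lambda\bigl(P(z+\zeta)\bigr)$$
is a polynomial of degree $\le n$ in the variable $\zeta\in\mathbb{C}^N$, since translation preserves degree and $\Lambda$ is linear. So the main work reduces to proving a single auxiliary claim: for every polynomial $F\in\mathbb{P}_n(\mathbb{C}^N)$ and every circular set $\mathbb{B}\subset\mathbb{C}^N$, the function
$$t\longmapsto \log M(F,e^t),\qquad M(F,r):=\sup_{\zeta\in r\mathbb{B}}|F(\zeta)|,$$
is convex on $\mathbb{R}$.

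To establish the claim I would first verify the identity
$$M(F,r)=\sup_{\zeta_0\in\mathbb{B}}\ \max_{|\lambda|=r}|F(\lambda\zeta_0)|.$$
The $\le$-direction is trivial, since any $\zeta\in r\mathbb{B}$ can be written as $r\zeta_0$ with $\zeta_0\in\mathbb{B}$ and one takes $\lambda=r$. The $\ge$-direction uses circularity: if $|\lambda|=r$ then $\lambda\zeta_0=r\bigl(e^{i\arg(\lambda/r)}\zeta_0\bigr)\in r\mathbb{B}$ because $e^{i\theta}\mathbb{B}=\mathbb{B}$ for all $\theta$. Now, for each fixed $\zeta_0$, the one-variable polynomial $\lambda\mapsto F(\lambda\zeta_0)$ is entire of degree $\le n$, so Hadamard's three-circle theorem gives convexity of $t\mapsto\log\max_{|\lambda|=e^t}|F(\lambda\zeta_0)|$. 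Taking the supremum over $\zeta_0\in\mathbb{B}$ of this family of convex functions yields a convex function of $t$, which is exactly $\log M(F,e^t)$.

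Granted the auxiliary claim, the four statements follow mechanically by stability of convexity under pointwise suprema. First,
$$\log \varphi_n(\mathbb{B},q,\Lambda,e^t)=\sup\bigl\{\log M(F_{P,\Lambda},e^t):\ \deg P\le n,\ \|P(z)\|\le 1\bigr\}$$
is a supremum of convex functions, hence convex. For the unnormed version one writes $\|P(z+\zeta)\|=\sup_{\|\Lambda\|^*=1}|\Lambda(P(z+\zeta))|$, turning $\log\varphi_n(\mathbb{B},q,e^t)$ into a joint supremum over $(P,\Lambda)$ of the same convex functions. Finally,
$$\log\varphi(\mathbb{B},q,\Lambda,e^t)=\sup_{n\ge 1}\tfrac{1}{n}\log\varphi_n(\mathbb{B},q,\Lambda,e^t),\qquad \log\varphi(\mathbb{B},q,e^t)=\sup_{n\ge 1}\tfrac{1}{n}\log\varphi_n(\mathbb{B},q,e^t),$$
so these too are suprema of convex functions and are therefore convex. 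The only genuine subtlety is the supremum identity for $M(F,r)$: one must make sure to use only the circularity hypothesis on $\mathbb{B}$ and not implicitly invoke $\lambda\mathbb{B}\subset\mathbb{B}$ for $|\lambda|\le 1$. Beyond that, the argument is precisely the Hadamard three-circles technique already used in \cite{BaranCiez1}, combined with the fact that the sup of convex functions is convex.
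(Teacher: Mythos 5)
Your proof is correct and takes essentially the approach the paper intends: the paper offers no written argument, only the remark that the statement follows by ``applying arguments from \cite{BaranCiez1}'', and your slice-wise Hadamard three-circles reduction (using circularity of $\mathbb{B}$ to write $\sup_{\zeta\in r\mathbb{B}}$ as a supremum over one-dimensional circles, then invoking stability of convexity under pointwise suprema and the duality $q(P)=\sup_{\|\Lambda\|^*=1}|\Lambda(P)|$) is precisely that argument carried out in full.
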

\medskip

\begin{rem} Since $\log\varphi_n(\mathbb{B},q,e^t)$ is a convex function,
 we get inequality
$$\varphi_n(\mathbb{B},q,rs)\leq \varphi_n(\mathbb{B},q,r^p)^{1/p}\varphi_n(\mathbb{B},q,s^q)^{1/q},\
\frac{1}{p}+\frac{1}{q}=1.$$

\end{rem}
\medskip

As a direct consequence of this theorem and known properties of convex functions we get an important properties (c.f. \cite{BaranCiez1}).
\medskip

\begin{cor} The following functions if are finite then are continuous and increasing on $(0,\infty)$:

$$\log\varphi_n(\mathbb{B},q,\Lambda,r),\ \log\varphi (\mathbb{B},q,\Lambda,r),\ \log\varphi_n(\mathbb{B},q,r),\ \log\varphi (\mathbb{B},q,r).$$

\end{cor}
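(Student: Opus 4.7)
The plan is to deduce both properties directly from the convexity asserted in the preceding theorem together with an elementary monotonicity argument; nothing beyond standard facts about convex functions of one real variable is required.

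First, the monotonicity. For $0\le r_1\le r_2$ the circularity of $\mathbb{B}$ (together with $0\in\mathbb{B}$, guaranteed by $\mathbb{B}$ being absorbing) yields the inclusion $r_1\mathbb{B}\subset r_2\mathbb{B}$. Consequently each of the suprema defining $\varphi_n(\mathbb{B},q,\Lambda,r)$ and $\varphi_n(\mathbb{B},q,r)$ is taken over a larger admissible set when $r$ grows, hence is non-decreasing in $r$. The operations $\sup_{n\ge 1}(\cdot)^{1/n}$ and $\log$ both preserve monotonicity, so all four functions in the statement are non-decreasing on $(0,\infty)$, which is the asserted ``increasing'' property.

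Next, the continuity. The preceding theorem provides convexity of each of the four functions $t\mapsto\log\varphi_n(\mathbb{B},q,\Lambda,e^t)$, $\log\varphi(\mathbb{B},q,\Lambda,e^t)$, $\log\varphi_n(\mathbb{B},q,e^t)$, $\log\varphi(\mathbb{B},q,e^t)$ on all of $\mathbb{R}$. If such a function is finite on $(0,\infty)$ in the variable $r$, then it is finite on the whole line $\mathbb{R}$ in the variable $t=\log r$. A convex function that is finite on an open interval is automatically continuous there (in fact locally Lipschitz on the interior of its finiteness domain); this is a classical fact from convex analysis. Composing with the continuous bijection $\log\colon(0,\infty)\to\mathbb{R}$ transfers continuity back to the variable $r$, yielding continuity on $(0,\infty)$.

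There is essentially no serious obstacle: both conclusions follow immediately once convexity (from the previous theorem) is combined with the trivial inclusion $r_1\mathbb{B}\subset r_2\mathbb{B}$. The only point requiring mild care is the reading of the hypothesis ``if finite'': one should interpret it as finiteness on the entire open ray $(0,\infty)$, since the non-decreasing nature of the functions allows them to take the value $+\infty$ on a right tail, in which case continuity can still fail at the right endpoint of the finiteness interval, but holds throughout the open subinterval where the function is finite.
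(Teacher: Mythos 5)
Your argument is correct and matches the paper's (unwritten) proof: the paper simply invokes the preceding convexity theorem together with standard facts about convex functions, which is exactly what you do — continuity from convexity of $t\mapsto\log\varphi(\cdot,e^t)$ on the interior of its finiteness domain, and monotonicity from the trivial inclusion $r_1\mathbb{B}\subset r_2\mathbb{B}$. Your closing remark on interpreting ``if finite'' is a reasonable and harmless clarification.
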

\medskip

Applying known (but still dificult to prove) we get one of  reasons that introduced notions can be helpful.

\begin{cor} If $ \log\varphi (\mathbb{B},q,\Lambda,r)$ or $ \log\varphi (\mathbb{B},q,r)$ is finite then this function is differentiable except possibly countable set of points and is twice differentiable almost everywhere (Alexandrov's theorem).

\end{cor}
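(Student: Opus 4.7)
The plan is to reduce the statement to standard one-dimensional convex-function theory, using the preceding theorem and the smooth change of variables $t=\log r$.

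First, by the theorem just above, the functions $t\mapsto \log\varphi(\mathbb{B},q,\Lambda,e^t)$ and $t\mapsto \log\varphi(\mathbb{B},q,e^t)$ are convex on $\mathbb{R}$. Under the hypothesis of the corollary they are also finite, hence proper convex functions on the whole real line. I would invoke the classical one-variable facts: a proper convex function $f$ on an open interval admits left and right derivatives $f'_-(t)\le f'_+(t)$ at every point, both are nondecreasing, and $f'_-=f'_+$ except on an at most countable set (precisely the set of $t$ at which the monotone function $f'_+$ is discontinuous). Consequently $f$ is differentiable off a countable set.

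Next I would derive the twice-differentiability a.e.\ statement. The one-variable Alexandrov theorem here is just the combination of two classical results: the right-derivative $f'_+$ of a convex function is monotone, and by Lebesgue's differentiation theorem for monotone functions it is differentiable almost everywhere; at every point of differentiability of $f'_+$ the function $f$ itself is twice differentiable in the sense of a second-order Taylor expansion. (The full multi-dimensional Alexandrov theorem reduces to this in dimension one and is what the parenthetical remark in the statement refers to.) Thus $t\mapsto\log\varphi(\mathbb{B},q,\Lambda,e^t)$ is twice differentiable a.e.\ on $\mathbb{R}$, and similarly for the variant without $\Lambda$.

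Finally I would transfer back to the $r$-variable. Since $r\mapsto \log r$ is a $C^\infty$-diffeomorphism of $(0,\infty)$ onto $\mathbb{R}$ preserving Lebesgue nullsets and countable sets, composition with $r=e^t$ does not affect any of the properties just established: the function $\log\varphi(\mathbb{B},q,\Lambda,r)=v(\mathbb{B},q,\Lambda,e^t)\circ\log$ is continuous and increasing on $(0,\infty)$ (already noted in the previous corollary), differentiable outside a countable subset, and twice differentiable a.e. The same argument applies verbatim to $\log\varphi(\mathbb{B},q,r)$. There is no real obstacle; the only point that needs to be stated carefully is that all claims are a consequence of convexity in the logarithmic variable, so one should argue in the variable $t=\log r$ and then pull back by a smooth diffeomorphism.
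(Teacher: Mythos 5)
Your argument is correct and is exactly the route the paper intends: the corollary is presented as a direct consequence of the preceding convexity theorem combined with the classical facts that a finite convex function on $\mathbb{R}$ is differentiable off a countable set and (by monotonicity of the one-sided derivative plus Lebesgue's theorem, i.e.\ the one-dimensional case of Alexandrov's theorem) twice differentiable almost everywhere, pulled back through the diffeomorphism $r=e^t$. The paper gives no further detail, so your write-up simply makes explicit the steps the authors leave to the reader.
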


\bigskip


\section{Radial modifications of Siciak's extremal function.}
\bigskip

If $E\subset\mathbb{C}^N$ is a compact set then Siciak's extremal
function $\Phi_E(z)=\Phi (E,z)$ is usually defined as
$$\Phi (E,z):=\sup\{ |P(z)|^{1/\deg P}:\ \deg P\geq 1,\ ||P||_E\leq
1\},\ z\in\mathbb{C}^N.$$ In connection with $\Phi (E,z)$ there
are also considered functions $\Phi_n(E,z)$, where
$$\Phi_n(E,z)=\sup\{ |P(z)|:\ \deg P\leq n,\ ||P||_E\leq
1\},\ z\in\mathbb{C}^N.$$ There is known that (c.f. 3.2 in
\cite{SiciakKukuryku}) for all $z\in\mathbb{C}^N$
$$\Phi(E,z)=\sup\limits_{n\geq
1}\Phi_n(E,z)^{1/n}=\lim\limits_{n\rightarrow\infty}\Phi_n(E,z)^{1/n}.$$
The $L-capacity$ is defined as $C(E):=\liminf_{z\rightarrow \infty}||z||_2/\Phi(E,z)$ (cf. \cite{Klimek},\cite{SiciakAnnalesy},\cite{SiciakKukuryku}), which is Choquet capacity \cite{Kolodziej} and has product property $C(E\times F)=\min (C(E),C(F))$ \cite{BaranCiez1}.

Analogously we can define $C_{\nu}(E):=\liminf_{z\rightarrow\infty}\nu (z)/\Phi (E,z)$, where $\nu$ is a norm in $\mathbb{C}^N$. We refer to \cite{BaranCiez1} for examples, where $C_\nu (E)$ is explicitely computed. In the case $\nu(z)=||z||_p$ we also have product property: $C_{\nu} (E\times F)=\min (C_{\nu_1}(E),C_{\nu_2}(F))$, where $\nu_j(z_j)=||z_j||_p$.

Now  for $r\geq 0$ define

$$\varphi_n (r)=\varphi_n (E,r):=\sup\{ |P(z+\zeta )|:\ z\in
E,\ ||\zeta ||_2\leq r,\ \deg P\leq n,\ ||P||_E\leq 1\}$$

$$=\varphi_n(\mathbb{B}_N,E,r)=\sup\{ \Phi_n (E,z+\zeta):\ z\in E,||\zeta||_2\leq r\},$$

$$\varphi (r)=\varphi (E,r):=\sup\limits_{n\geq 1}\varphi_n (E,r)^{1/n}=\sup\{ \Phi (E,z+\zeta):\ z\in E,||\zeta||_2\leq r\},$$

 and
$$v(r)=v(E,r)=\log\varphi (E,r),\ v_n(r)=\log\varphi_n(E,r),\ r\geq 1,$$

$$u(t)=u(E,t)=v(e^t),\ u_n(t)=v_n(e^t),\ t\in\mathbb{R}.$$

An important tool in polynomial approximation theory plays the {\it homogeneous capacity}$\sigma (E)$ related to the {\it homogeneous Siciak extremal function} $\Psi (E,z)$:
$$\Psi (E,z)=\sup\limits_{n\geq 1}\Psi_n (E,z)^{1/n}=\lim\limits_{n\rightarrow\infty}\Psi_n (E,z)^{1/n},$$ where 
$$\Psi_n(E,z)=\sup\{ |P(z)|:\ P\ \text{\rm  homogeneous\ of degree}\ n,\ \ ||P||_E\leq
1\},\ z\in\mathbb{C}^N.$$
$$\sigma_\nu(E):=\liminf\limits_{z\rightarrow\infty}\frac{\nu(z)}{\Psi (E,z)}=\inf\limits_{\nu(z)=1}\frac{1}{\Psi (E,z)}=\frac{1}{\sup\limits_{\nu(z)=1}\Psi (E,z)}.$$
There is known(c.f. \cite{Korevaar1} - \cite{Korevaar4}) the following description of homogeneus capacity in the case $\nu (z)=||z||_\infty$: $$\sigma_\nu (E)=\inf\limits_{n\geq 1}\beta_n(E)=\lim\limits_{n\rightarrow\infty}\beta_n(E),$$
where 
$$\beta_n(E)=\inf\limits_{|\alpha|=n}\inf_{b_\beta\in\mathbb{C}} \|
z^\alpha-\sum\limits_{|\beta|=n,\beta\neq\alpha}b_\beta z^\beta\|_E^{1/n}.$$ The constants $\beta_n(E)$ are optimal in the following deep result, which was proved by Koreavaar refining earlier joint lemma with Wiegerinck (cf. \cite{Korevaar1} - \cite{Korevaar4}).
\medskip

\begin{pro}
 If $E\subset S^{N-1}\subset \mathbb{R}^N$ satisfy $\beta_n(E)>0$ then for any $f$ $C^\infty$ function on a neighborhood of some point $a\in\mathbb{R}^N$ one has inequality:
$$\max\limits_{|\alpha|=n}\binom{n}{\alpha}\left|D^{\alpha}f(a)\right|\leq \sup\limits_{y\in E}\left|\left(\frac{d}{dt}\right)^nf(a+ty)|_{t=0}\right|/\beta_n(E)^n\leq \sup\limits_{y\in E}\left|\left(\frac{d}{dt}\right)^nf(a+ty)|_{t=0}\right|/\sigma(E)^n.$$
\end{pro}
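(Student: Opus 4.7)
The plan is to identify the quantity on the right of the first inequality as the sup-norm on $E$ of a homogeneous polynomial whose coefficients are exactly the multinomial $\binom{n}{\alpha}D^\alpha f(a)$, and then to invoke the definition of $\beta_n(E)$ directly.

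First I would expand the directional derivative by the chain rule (or by Taylor's formula applied to $t\mapsto f(a+ty)$):
$$\left(\frac{d}{dt}\right)^n f(a+ty)\Big|_{t=0}=\sum_{|\alpha|=n}\binom{n}{\alpha}D^\alpha f(a)\,y^\alpha =: P(y),$$
a homogeneous polynomial of degree $n$ in $y\in\mathbb{R}^N$ (or $\mathbb{C}^N$). Thus the middle quantity equals $\|P\|_E/\beta_n(E)^n$, so it suffices to show that for every multi-index $\alpha_0$ with $|\alpha_0|=n$,
$$\binom{n}{\alpha_0}|D^{\alpha_0}f(a)|\;\le\;\|P\|_E\,\beta_n(E)^{-n}.$$

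Fix such $\alpha_0$ and set $c_\alpha:=\binom{n}{\alpha}D^\alpha f(a)$. If $c_{\alpha_0}=0$ there is nothing to prove; otherwise factor it out:
$$P(y)=c_{\alpha_0}\Bigl(y^{\alpha_0}-\sum_{|\beta|=n,\,\beta\neq\alpha_0}b_\beta\,y^\beta\Bigr),\qquad b_\beta:=-c_\beta/c_{\alpha_0}.$$
By the very definition of $\beta_n(E)$ the sup-norm on $E$ of the bracketed expression is at least $\beta_n(E)^n$, so $\|P\|_E\ge |c_{\alpha_0}|\,\beta_n(E)^n$, which rearranges to the desired bound. Taking the maximum over $\alpha_0$ gives the first inequality.

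The second inequality is then immediate from the Korevaar--Wiegerinck identity $\sigma(E)=\inf_{n\ge 1}\beta_n(E)=\lim_{n\to\infty}\beta_n(E)$ quoted just before the proposition: it implies $\beta_n(E)\ge\sigma(E)>0$, hence $\beta_n(E)^{-n}\le\sigma(E)^{-n}$ and the right-hand bound follows.

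There is no real obstacle here: the content of the proposition is the equality of directional-derivative data with the coefficient data of a homogeneous polynomial, after which the bound is a tautology from the definition of $\beta_n(E)$. The only subtle point worth flagging in the write-up is that $P$ is a \emph{homogeneous} polynomial of degree $n$, so the variational problem defining $\beta_n(E)$ (competing against all homogeneous polynomials of degree $n$ with leading monomial $y^{\alpha_0}$) is exactly the right one to control $|c_{\alpha_0}|$.
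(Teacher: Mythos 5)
Your proof is correct. Note, however, that the paper offers no proof of this proposition at all: it is quoted as a known result of Korevaar (refining the Korevaar--Wiegerinck lemma on mixed derivatives), with the reader sent to \cite{Korevaar1}--\cite{Korevaar4}, so there is no internal argument to compare against. Your route is the natural one and it does establish the stated inequality from the definitions alone: the identity $\left(\frac{d}{dt}\right)^n f(a+ty)\big|_{t=0}=\sum_{|\alpha|=n}\binom{n}{\alpha}D^\alpha f(a)\,y^\alpha$ identifies the supremum over $y\in E$ with $\|P\|_E$ for the homogeneous polynomial $P$ with coefficients $c_\alpha=\binom{n}{\alpha}D^\alpha f(a)$, and factoring out $c_{\alpha_0}$ reduces the coefficient bound to the defining variational property of $\beta_n(E)$ (the infimum there is over complex $b_\beta$, a larger class than the real coefficients you produce, so the lower bound $\beta_n(E)^n$ certainly applies); the second inequality is immediate from $\sigma(E)=\inf_{n\ge 1}\beta_n(E)\le\beta_n(E)$, and is vacuous if $\sigma(E)=0$. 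What your argument makes transparent is that the ``deep'' content advertised by the authors is not this inequality itself but the surrounding facts: the identity $\sigma(E)=\inf_n\beta_n(E)=\lim_n\beta_n(E)$, the positivity of these constants for interesting sets $E$, and the optimality of $\beta_n(E)$ in the inequality --- none of which your derivation needs for the bound as stated.
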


\begin{rem} The proposition fails in the most interesting case $E=\{e_1,\dots,e_N\}$, especially in the case of polynomials. However if we consider family of constants $M_n(q,\alpha)$ then probably the following is true:

if $q$ is an arbitrary norm in $\mathbb{P}(\mathbb{C}^N)$ then there exists a positive constant $a=a(q)$ such that 
$$\max\limits_{|\alpha|=k}M_n(q,\alpha)\leq a(q)^k\max\limits_{1\leq j\leq N}M_n(q,ke_j).$$

Let us note, as an example, that for $q(P)=||P||_{\mathbb{D}_N}$ one can take $a(q)=e^{N}$. Similarly, in the case $q(P)=||P||_{[-1,1]^N}$ we can put $a(q)=e^{2N}$.

\end{rem}
\medskip

\begin{exa}
\begin{itemize}
\item[]

\item[(1)] If $E=\{ z\in\mathbb{C}^N:\ ||z||\leq 1\}$ ($||z||$ is a norm in $\mathbb{C}^N$) then 
$\varphi (E,r)=1+r/C(E)$. By \cite{Monn}
 $$(dd^c\log \varphi (E,||z||_2)^N=\frac{1}{4}\left(\frac{1}{2}\right)^{N-1}\left(\frac{1}{||z||_2}\right)^{N}\left(\frac{1}{||z||_2+C(E)}\right)^{N+1}C(E),$$
 $$\lim_{z\rightarrow\infty}2^{N+1}||z||_2^{2N+1}(dd^c\log \varphi (E,||z||_2)^N=C(E).$$
\medskip

\item[(2)] If $E=\{ z\in\mathbb{R}^N\subset\mathbb{C}^N=\mathbb{R}^N+i\mathbb{R}^N+i\mathbb{R}^N:\ \nu(z)\leq 1\}$ then 
$\varphi (E,r)=h(1+r/(2C(E)))$, where $h(t)=t+\sqrt{t^2-1},\ t\geq 1$. (We also have $h(t)=g|_{[1,+\infty)}^{-1}(t),\ g(t)=\frac{1}{2}(t+1/t)=t-\hat{g}(t),\ \hat{g}(t)=\frac{1}{2}(t-1/t)$.) In this case we can calculate 
$$(dd^c\log \varphi (E,||z||_2)^N=\frac{1}{4}\left(\frac{1}{2}\right)^{N-1}\left(\frac{1}{||z||_2}\right)^{3N/2}\left(\frac{1}{||z||_2+4C(E)}\right)^{N/2+1}2C(E),$$
$$\lim_{z\rightarrow\infty}2^{N+1}||z||_2^{2N+1}(dd^c\log \varphi (E,||z||_2)^N=2C(E).$$
\medskip

\item[(3)] If $E$ is the closed unit ball in $\mathbb{C}^N$ with respect to a norm $n(z)=||z||$ then there is known (cf. \cite{SiciakAnnalesy},\cite{SiciakKukuryku}) that $\Psi(E,z)=||z||$ (while $\Phi (E,z)=\max(1,||z||)$), whence
$$C_\nu(E)=\sigma_{\nu}(E)=\frac{1}{\sup\limits_{\nu(z)=1}||z||}.$$
\medskip

\item[(4)] A situation is much more complicated if $E$ is a convex symmetric body in $\mathbb{R}^N$. There was known in the case $E$ is the unit Euclidean ball in $\mathbb{R}^N$ than $\Psi (E,z)=L(z)=\left(\frac{||z||_2^2+|z_1^2+\dots z_N^2|}{2}\right)^{1/2}$ is the Lie norm
(which gives $\sigma (E)=\frac{1}{\sqrt{2}}>\frac{1}{2}=C(E)$).

If $N>2$ a situation is quite unclear. But in the case $N=2$ there is known the following result \cite{Baran1} (cf. \cite{Baran2}):

Let $S$ be the unit ball with respect to a norm $N$ in
$\mathbb{R}^2$. If $u(t)=\log N(1,t)$ then
$$\Psi(S,(z_1,z_2))=|z_1|\exp \mathcal{P}u(z_2/z_1),$$
with $$ \mathcal{P}u(\zeta )=(\Im\zeta
)\frac{1}{\pi}\int_{-\infty}^\infty |\zeta
-t|^{-2}u(t)dt=\frac{1}{\pi}\int_{-\infty}^\infty u(ty+x)
\frac{dt}{1+t^2},$$ where $\zeta =x+iy,\ y\geq 0$.
\medskip

In particular, ff $N_m(x)=\left(x_1^{2m}+x_2^{2m}\right)^{1/(2m)}$ and $S_m=\{
x\in\mathbb{R}^2:\ N_m(x)=1\}$, then for all $z\in\mathbb{C}^2$,
$$\Psi(S_m,z)=\left[ \prod\limits_{j=1}^m\left(|z_1|^2-2\alpha_j
\Re (z_1\overline{z_2})+|z_2|^2+2 |\beta_j|\Im
(z_1\overline{z_2})|\right)^{1/2}\right]^{1/m},$$ where
$\zeta_j=\alpha_j+i\beta_j\in\root{2m}\of{-1},\ j=1,\dots ,m, $
with $\zeta_j\neq \overline{\zeta_k}$ for $j\neq k$.

If $N_\infty (x)=\max (|x_1|,|x_2|)$ and $S_\infty=\{
x\in\mathbb{R}^2:\ N_\infty(x)=1\}$, then for all
$z\in\mathbb{C}^2$,
$$\Psi (S_\infty,z)=\exp \left[
\int_0^{2\pi}\log\left(|z_1|^2-2\cos\theta  \Re
(z_1\overline{z_2})+|z_2|^2+2 |\sin\theta \Im
(z_1\overline{z_2})|\right)^{1/2}\frac{d\theta}{2\pi}\right] .$$
Since $S_1=\{x\in \mathbb{R}^2:\ |x_1|+|x_2|= 1\}=L^{-1}(S_\infty)
$, where $L(z_1,z_2)=(z_1-z_2,z_1+z_2)$, we get 
$$\Psi (S_1,z)=\Psi(S_\infty,L(z))$$
$$=\exp \left[
\int_0^{2\pi}\log\left(2|z_1|^2+2|z_2|^2-2\cos\theta  (|z_1|^2-|z_2|^2)+4|\sin\theta \Im (z_1\overline{z_2})|\right)
\frac{d\theta}{4\pi}\right].$$
\end{itemize}
\end{exa}
\bigskip

\begin{exa} 

\begin{itemize}
\item[•] \ \

\item[(1)] 
 $E=\{z\in\mathbb{C}:\ |z|\leq R\}$, $u(t)=\log\varphi (E,e^t)=\log (1+ce^t),\ c=\frac{1}{C(E)}=\frac{1}{R},\ \frac{u''(t)}{u'(t)}=\frac{1}{1+ce^t}$, $\lim_{t\rightarrow\infty}e^t \frac{u''(t)}{u'(t)}=\frac{1}{c}$.
 \medskip
 
 $\Delta \log\varphi (E,|z|)=\frac{c}{|z|(1+c|z|)^2}$, $\lim_{z\rightarrow\infty}|z|^3\Delta \log\varphi (E,|z|)=\frac{1}{c}$.
\medskip

\item[(2)] $E=[a,b]$, $
u(t)=\log h(1+ce^t),\ c=\frac{1}{2C(E)}=\frac{2}{b-a},$ 
$ \frac{u''(t)}{u'(t)}=\frac{1}{2+ce^t}.$ $\lim_{t\rightarrow\infty}e^t \frac{u''(t)}{u'(t)}=\frac{1}{c}$.
\medskip

$\Delta \log\varphi (E,|z|)=\frac{c^2}{((1+c|z|)^2-1)^{3/2}}$, $\lim_{z\rightarrow\infty}|z|^3\Delta \log\varphi (E,|z|)=\frac{1}{c}$.
\medskip

\item[(3)]
$E=\{ z\in\mathbb{C}:\ \Phi([-1,1],z)\leq R\},\ R\geq 1$, 
$u(t)=\log h(g(R)+e^t)-\log R,$
$ \frac{u''(t)}{u'(t)}=\frac{\frac{g(R)+1}{2}}{g(R)+1+e^t}+\frac{\frac{g(R)-1}{2}}{g(R)-1+e^t}.$ $\lim_{t\rightarrow\infty}e^t \frac{u''(t)}{u'(t)}=g(R)$.
\medskip

$\Delta \log\varphi (E,|z|)=\frac{\hat{g}(R)^2+g(R)|z|}{|z|((g(R)+|z|)^2-1)^{3/2}}$, $\lim_{z\rightarrow\infty}|z|^3\Delta \log\varphi (E,|z|)=g(R)$.

\item[(4)]
$E=[-1,1]\times \overline{\mathbb{D}}_{R}$

$$u(t)=\max \left(\log h(1+e^t),\log (1+e^t/R)\right)=\begin{cases} \log h(1+e^t),\ R\geq \frac{1}{2}\\
\log h(1+e^t),\ t<\log\left( \frac{2}{(1/R-1)^2-1}\right),\
0<R<\frac{1}{2},\\ \log (1+e^t/R),\ t>\log\left(
\frac{2}{(1/R-1)^2-1}\right)
\end{cases}$$

$$ \frac{u''(t)}{u'(t)}=\begin{cases}
\frac{1}{2+e^t},\ t<\log\left( \frac{2}{(1/R-1)^2-1}\right),\ 0<R<\frac{1}{2},\\
\frac{1}{1+e^t/R},\ t>\log\left( \frac{2}{(1/R-1)^2-1}\right)
\end{cases}$$
\medskip

\item[(5)]$E=\mathbb{D}\cup\{z_0\},\ z_0\not\in\mathbb{D}$. Then $\varphi (E,r)=|z_0|+r$, $r>0$ and $\varphi (E,0)=1$. Further, $\log\varphi (E,r)=\log (1+r/|z_0|)+\log |z_0|=\log\varphi (|z_0|\mathbb{D},r)+\log |z_0|$, whence $\frac{u''(t)}{u'(t)}=\frac{|z_0|}{|z_0|+r}$,

$$\Delta \log\varphi (E,|z|)=\frac{1/|z_0|}{|z|(1+|z|/|z_0|)^2}, \ \lim_{z\rightarrow\infty}|z|^3\Delta \log\varphi (E,|z|)=|z_0|=\lim\limits_{t\rightarrow\infty}e^t\frac{u''(t)}{u'(t)}.$$
Let us note (for $z_0=2$) that applying the  formula $\varphi (E,r)=2+r$ and considering polynomials $P_n(z)=(z-a_n)z^{n-1}$, where $a_n=3\frac{2^{n-1}}{2^{n-1}+1}-1$ we get bounds 
$$(n-1)\cdots (n-k+1)2^{-k}(n+\frac{k}{3}(2^n-1))\leq M_n(E,k)\leq e^k2^{n-k}n^k.$$
\medskip


\end{itemize}
\end{exa}
\bigskip

Let us recall mentioned above David Monn result from \cite{Monn} (it is only one paper published by this mathematician).

\begin{pro} If $U$ is the $C^2$ plurisubharmonic funcion on $\mathbb{C}^N$ that is radial ($U(z)=u(||z||_2)$ with $u\in C^2(\mathbb{R}_+$) then 
$$(dd^cU)^n(z)=\frac{1}{4}\left(\frac{u'(||z||_2}{2||z||_2}\right)^{N-1}\left(u''(||z||_2)+\frac{1}{||z||_2}u'(||z||_2)\right).$$
\end{pro}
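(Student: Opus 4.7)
The plan is to reduce everything to the two elementary building blocks $\omega_0:=i\sum_j dz_j\wedge d\bar z_j$ and $\tau:=i\,\partial\rho\wedge\bar\partial\rho$, where $\rho(z)=\|z\|_2^2$, and then exploit the fact that $\tau$ has rank one. First I would replace the radial variable $r=\|z\|_2$ by $\rho=r^2$ and write $U=\tilde u(\rho)$ with $\tilde u(\rho)=u(\sqrt\rho)$. The chain rule and the identities $\partial\rho=\sum_j\bar z_j\,dz_j$, $\bar\partial\rho=\sum_j z_j\,d\bar z_j$, $\partial\bar\partial\rho=\sum_j dz_j\wedge d\bar z_j$ give at once
$$i\,\partial\bar\partial U \;=\; \tilde u'(\rho)\,\omega_0 \;+\; \tilde u''(\rho)\,\tau.$$

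Next, since $\tau$ is the wedge of a $(1,0)$-form with its conjugate, $\tau\wedge\tau=0$. Therefore the binomial expansion collapses to two terms:
$$(i\,\partial\bar\partial U)^N \;=\; \tilde u'(\rho)^{N}\,\omega_0^N \;+\; N\,\tilde u'(\rho)^{N-1}\tilde u''(\rho)\,\omega_0^{N-1}\wedge\tau.$$

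The key geometric step is to evaluate $\omega_0^{N-1}\wedge\tau$ at a fixed $z\ne 0$. By unitary invariance of $\omega_0$ I would choose coordinates so that $z=(r,0,\dots,0)$. Then $\partial\rho=r\,dz_1$, so $\tau=r^2\cdot i\,dz_1\wedge d\bar z_1$. Splitting $\omega_0=i\,dz_1\wedge d\bar z_1+\sum_{j\ge 2}i\,dz_j\wedge d\bar z_j$ and noting that wedging with $i\,dz_1\wedge d\bar z_1$ annihilates any term containing $dz_1$ or $d\bar z_1$, a one-line calculation gives $\omega_0^{N-1}\wedge(i\,dz_1\wedge d\bar z_1)=\tfrac{1}{N}\omega_0^N$, whence $\omega_0^{N-1}\wedge\tau=\tfrac{r^2}{N}\omega_0^N$. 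This is the only place that the ambient dimension really enters, and it is the main (mild) obstacle; everything else is bookkeeping.

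Substituting back yields
$$(i\,\partial\bar\partial U)^N \;=\; \tilde u'(\rho)^{N-1}\bigl[\tilde u'(\rho)+r^2\tilde u''(\rho)\bigr]\,\omega_0^N.$$
Finally I would translate back to derivatives of $u$ with respect to $r$ using $\tilde u'(\rho)=\frac{u'(r)}{2r}$ and $\tilde u''(\rho)=\frac{u''(r)-u'(r)/r}{4r^2}$. A direct simplification transforms the bracket into $\frac14\bigl(u''(r)+\frac{1}{r}u'(r)\bigr)$, while the prefactor becomes $\bigl(\frac{u'(r)}{2r}\bigr)^{N-1}$. Under the paper's convention $dd^c=i\,\partial\bar\partial$ and the identification of $(dd^cU)^N$ with its density against $\omega_0^N$, this is exactly the stated formula.
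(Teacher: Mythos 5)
Your computation is correct: the decomposition $i\,\partial\bar\partial U=\tilde u'(\rho)\,\omega_0+\tilde u''(\rho)\,\tau$ with $\tau\wedge\tau=0$, the identity $\omega_0^{N-1}\wedge\tau=\frac{r^2}{N}\omega_0^N$ obtained by rotating $z$ to $(r,0,\dots,0)$, and the final change of variable from $\tilde u$ back to $u$ all check out and reproduce the stated density $\frac14\bigl(\frac{u'(r)}{2r}\bigr)^{N-1}\bigl(u''(r)+\frac1r u'(r)\bigr)$ exactly. Note, however, that the paper gives no proof of this proposition at all --- it is recalled verbatim from D. Monn's article \cite{Monn} --- so there is nothing internal to compare your argument against; what you have written is the standard self-contained derivation, and it is sound (valid for $z\neq 0$, which is all the statement can mean given the $\|z\|_2$ in the denominators).
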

\begin{cor} If $\lim_{r\rightarrow \infty}ru'(r)=1$ then 
$$\lim\limits_{z\rightarrow\infty}2^{N+1}||z||_2^{2N+1}(dd^cU)^n(z)=\lim_{t\rightarrow\infty}e^t\frac{v''(t)}{v'(t)},\ v(t)=u(e^t).$$

\end{cor}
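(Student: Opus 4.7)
The plan is to substitute Monn's explicit formula for $(dd^cU)^N$ into the left-hand side, carry out the algebra with the normalization $2^{N+1}\|z\|_2^{2N+1}$, and recognize the result as the product of $(ru'(r))^N$ with an expression that coincides precisely with $e^t v''(t)/v'(t)$ under the substitution $r=e^t$. The hypothesis $ru'(r)\to 1$ then kills the first factor, yielding equality of the two limits.

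First, writing $r=\|z\|_2$ and plugging Monn's proposition into $2^{N+1}r^{2N+1}(dd^cU)^N(z)$, the powers of $2$ cancel and the powers of $r$ collapse, leaving
$$2^{N+1}r^{2N+1}(dd^cU)^N(z)=r^{N+1}(u'(r))^{N-1}\bigl(u'(r)+ru''(r)\bigr)=(ru'(r))^{N-1}\cdot r^2\bigl(u'(r)+ru''(r)\bigr).$$
Next I would compute the right-hand side of the claim directly from $v(t)=u(e^t)$: the chain rule gives $v'(t)=e^tu'(e^t)$ and $v''(t)=e^{2t}u''(e^t)+e^tu'(e^t)$, so with $r=e^t$ one finds
$$e^t\frac{v''(t)}{v'(t)}=r+\frac{r^2 u''(r)}{u'(r)}=\frac{r(u'(r)+ru''(r))}{u'(r)}.$$

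Comparing the two displays and using the trivial identity $r^2\cdot u'(r)=r\cdot(ru'(r))$ produces the clean relation
$$2^{N+1}\|z\|_2^{2N+1}(dd^cU)^N(z)=(ru'(r))^N\cdot e^t\frac{v''(t)}{v'(t)},\qquad r=\|z\|_2=e^t,$$
valid wherever $u'(r)\neq 0$. Letting $z\to\infty$ is the same as letting $r\to\infty$, equivalently $t\to\infty$, and the hypothesis $ru'(r)\to 1$ forces $(ru'(r))^N\to 1$, so the two one-sided limits agree and their common value is precisely $\lim_{t\to\infty}e^tv''(t)/v'(t)$.

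There is essentially no technical obstacle; the only subtlety is the bookkeeping of the powers of $2$ and of $r$ in Monn's density against the normalization $2^{N+1}\|z\|_2^{2N+1}$, together with the observation that what is left after extracting $(ru'(r))^N$ is exactly the quantity $e^tv''(t)/v'(t)$. Once this algebraic identity is in place, the hypothesis $\lim ru'(r)=1$ is visibly the minimal assumption needed to conclude.
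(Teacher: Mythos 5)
Your computation is correct and is exactly the direct verification the paper intends (the corollary is stated without proof, as an immediate consequence of Monn's formula): the powers of $2$ and of $r=\|z\|_2$ cancel to give $2^{N+1}r^{2N+1}(dd^cU)^N=r^{N+1}(u')^{N-1}(u'+ru'')=(ru'(r))^N\,e^t v''(t)/v'(t)$, and the hypothesis $ru'(r)\to 1$ finishes the argument. No gaps.
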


\begin{rem} Let use notice some observations in the one dimensional case.

\begin{itemize}

\item
We have $\varphi_1(E,r)=1+M_1(E)r$, where $$M_1(E)=\sup\{
||P'||_E:\ P\in \mathbb{P}_1( \mathbb{C}),\ ||P||_E=1\}.$$ Hence
$$(1+M_1(E)r)^n\leq \varphi_n(E,r),\ n\geq 1, $$ or equivalently
 $(1+r)^n\leq \varphi_n(E,r/M_1(E))$. As an
application we get the following:

\item[] For all $r,s\geq 0$
$$ \varphi_n(E,rs)\leq \varphi_n(E,r)\varphi_n(E,s/M_1(E)).$$
Moreover, if $r\geq 1/M_1(E)$ then
$$ \varphi_n(E,r+s)\leq \varphi_n(E,r)\varphi_n(E,s).$$

\begin{proof} As a consequence of Bernstein's inequality we get

$$ \varphi_n(E,rs)\leq \varphi_n(E,r)\max (1,s)^n
\leq \varphi_n(E,r)(1+s)^n\leq
\varphi_n(E,r)\varphi_n(E,s/M_1(E)).$$ Analogously
$$ \varphi_n(E,r+s)\leq \varphi_n(E,r)(1+s/r)^n\leq \varphi_n(E,r)(1+M_1(E)s)^n\leq
\varphi_n(E,r)\varphi_n(E,s).$$
\end{proof}

\item  Let $E$ be a Bernstein set, that is $$M(E,1)=\sup\left\{ \frac{||P'||_E}{\deg
P},\ \deg P\geq 1,\ ||P||_E=1\right\}<+\infty.$$ Then
$$(1+M_1(E)r)^n\leq \varphi_n(E,r)\leq (1+M(E,1)r)^n$$ and
$$\varphi_n(E,r+s)\leq
\varphi_n(E,(M(E,1)/M_1(E))r)\varphi_n(E,(M(E,1)/M_1(E))s).$$

\item  Define (cf. \cite{MirekAgnieszkaBeataPawel})
$$ ||P||_r=\sum\limits_{k=0}^\infty \frac{1}{k!}||P^{(k)}||_Er^k,\ r\geq 0, \ ||P||_0=||P||_E.$$ Then 
$$ \sup\limits_{|\zeta|\leq r}||P(x+\zeta )||_E\leq ||P||_r\leq (\deg P+1) \sup\limits_{|\zeta|\leq r}||P(x+\zeta )||_E.$$
Hence
$$\varphi_n(E,r)\leq \sup\{ ||P||_r:\ P\in\mathbb{P}_n( \mathbb{C}),||P||_0\leq 1\}\leq (n+1)\varphi_n(E,r)$$ and therefore
$$\varphi (E,r)=\lim_{n\rightarrow\infty}\sup\{ ||P||_r:\ P\in\mathbb{P}_n( \mathbb{C}),||P||_0\leq 1\}^{1/n}.$$
\end{itemize}
\end{rem}

\bigskip

 If
$E$ is a compact subset of $ \mathbb{C}^N$ with $C(E)>$ then there
is known (c.f. \cite{BaranCiez1}) that $u(t)=\log\varphi (E,e^t)$ is a convex
increasing function and $\Lambda (t)=u(t)-t$ is a (convex)
decreasing one with $\Lambda (t) \searrow -\log C(E)$. In
particular
\medskip

$\lim\limits_{r\rightarrow\infty}
\frac{\varphi (E,r)}{r}=\frac{1}{C(E)}$ and $
\frac{\varphi (E,r)}{r}\searrow\frac{1}{C(E)}$. \bigskip








\bigskip




\begin{pro} Assume that $v_n(E,r)$ is finite for $r>0$. Then there
is a positive constant $C_n(E)$ such that
$$v_n(E,r)-\log r\searrow -\log C_n(E)$$ and thus $C_n(E)=\lim\limits_{r\rightarrow +\infty}
\frac{r}{\varphi_n(E,r)}=\sup\limits_{r>0}\frac{r}{\varphi_n(E,r)}$,
which implies $\varphi_n(E,r)\geq \frac{r}{C_n(E)}$.

\end{pro}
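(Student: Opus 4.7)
The plan is to build on the convexity established in the preceding Theorem. By that result, $u_n(t) := \log \varphi_n(E, e^t) = v_n(E, e^t)$ is convex on $\mathbb{R}$, hence $\Lambda_n(t) := u_n(t) - t$ is also convex, being a sum of a convex and an affine function. After the substitution $r = e^t$ we have $\Lambda_n(t) = v_n(E, r) - \log r$, so the proposition amounts to a monotone-convergence statement for the convex function $\Lambda_n$ at $+\infty$.

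The general fact I would invoke is that a convex function $f \colon \mathbb{R} \to \mathbb{R}$ with $\limsup_{t \to \infty} f(t) < +\infty$ is automatically non-increasing on $\mathbb{R}$: otherwise, choosing $t_0 < t_1$ with $f(t_0) < f(t_1)$, convexity forces
$$f(t) \;\geq\; f(t_1) + \frac{f(t_1) - f(t_0)}{t_1 - t_0}(t - t_1) \longrightarrow +\infty \quad \text{as } t \to \infty,$$
a contradiction. Applied to $\Lambda_n$, once one knows $\limsup_{t \to \infty} \Lambda_n(t) < +\infty$, the function $\Lambda_n$ decreases monotonically to some $L \in [-\infty, +\infty)$. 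The hypothesis $v_n(E, r) < +\infty$ for every $r > 0$ keeps $\Lambda_n$ real-valued on all of $\mathbb{R}$, so in fact $L > -\infty$ as well, and we may set $L =: -\log C_n(E)$ with $C_n(E) > 0$.

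Unwinding the substitution yields the remaining assertions at once. Since $\Lambda_n(t) = \log(\varphi_n(E, r)/r)$, the monotone convergence $\Lambda_n(t) \searrow -\log C_n(E)$ reads $\varphi_n(E, r)/r \searrow 1/C_n(E)$, i.e.\ $r/\varphi_n(E, r) \nearrow C_n(E)$. Hence
$$C_n(E) \;=\; \lim_{r \to \infty} \frac{r}{\varphi_n(E, r)} \;=\; \sup_{r > 0} \frac{r}{\varphi_n(E, r)},$$
and the sup bound rearranges to $\varphi_n(E, r) \geq r/C_n(E)$.

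The main obstacle is establishing $\limsup_{t \to \infty} \Lambda_n(t) < +\infty$: the finiteness hypothesis on $v_n(E, r)$ alone only ensures $\Lambda_n$ is real-valued, not that it is bounded above at infinity. I would extract this upper bound either from the already-recalled linear asymptotics $\varphi(E, r)/r \searrow 1/C(E)$ for the limit function $\varphi$, combined with the comparison between $\varphi_n$ and $\varphi$ consistent with the normalization in force, or by a direct estimate using Bernstein/Markov-type inequalities of the sort recorded in the remark preceding the statement. With this single ingredient in hand, the convex-function skeleton above delivers the full proposition.
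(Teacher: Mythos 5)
Your convex-function skeleton (convex $+$ bounded above at $+\infty$ $\Rightarrow$ non-increasing, then unwind the substitution) is sound, but the two points you leave open are precisely where the paper's proof does its work, and your proposed substitutes do not close them. For the upper bound $\limsup_{t\to\infty}\Lambda_n(t)<\infty$: the paper's argument (given for the analogous $v_n(q,\cdot)$ statement in the next section) applies the maximum principle to the function $g(\zeta)=\frac{1}{n}\log|l(P(x+\zeta))|-\log|\zeta|$, which is subharmonic and bounded above on $\mathbb{C}\setminus\overline{\mathbb{D}}_{r_0}$ because $\deg P\le n$; taking suprema over $P$ and $l$ yields the monotonicity $v_n(\cdot,r)-\log r\le v_n(\cdot,r_0)-\log r_0$ for $r\ge r_0$ directly, with no hypothesis beyond finiteness. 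Your first proposed route (comparison with $\varphi(E,r)/r\searrow 1/C(E)$) silently assumes $C(E)>0$, which is not a hypothesis of the proposition, and your second (Bernstein/Markov inequalities) likewise imports properties of $E$ that are not assumed. Note also that the $\frac1n$ in the paper's $g$ is essential: for the literal normalization $v_n=\log\varphi_n$ the claim fails already for $E=\overline{\mathbb{D}}$, $n\ge 2$, where $\varphi_n(E,r)=(1+r)^n$ and $v_n(E,r)-\log r\to+\infty$; the statement is really about $\frac1n\log\varphi_n(E,r)-\log r$, and any correct proof of the upper bound must exploit the degree bound in exactly this way.

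The second gap is your deduction that $L>-\infty$: ``$\Lambda_n$ is real-valued on all of $\mathbb{R}$, so $L>-\infty$'' is a non sequitur, since a finite decreasing convex function (e.g.\ $t\mapsto -t$) can perfectly well tend to $-\infty$. Positivity of $C_n$ requires a genuine lower bound on $\varphi_n(E,r)$ of the correct order in $r$, which the paper supplies by testing the supremum against a monic (Chebyshev) polynomial: $\sup_{|\zeta|=r}\|T_n(q,x+\zeta)\|/\|T_n(q)\|$ grows like $r^n/t_n(q)$, giving $C_n(q)\le t_n(q)^{1/n}<\infty$ and hence a finite limit. Some such exhibition of an explicit competitor polynomial is unavoidable, and it is absent from your proposal.
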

\bigskip

\section{A radial extremal function related to a norm in $
\mathbb{P}( \mathbb{C})$} \ \\
\medskip

\bigskip

\begin{pro} Assume that $v_n(q,r)$ is finite for $r>0$. Then there
is a positive constant $C_n(q)$ such that
$$v_n(q,r)-\log r\searrow -\log C_n(q)$$ and thus $C_n(q)=\lim\limits_{r\rightarrow +\infty}
\frac{r}{\varphi_n(q,r)}=\sup\limits_{r>0}\frac{r}{\varphi_n(q,r)}$,
which implies $\varphi_n(q,r)\geq \frac{r}{C_n(q)}$. Moreover
$$t_n(q)^{1/n}\geq C_n(q),\ n\geq 1,\   t(q)\geq \min\limits_{n\geq
1}C_n(q)\ \ \ \ \text{\rm and}\ \ \ t(q)\geq C(q).$$ Here
$C(q):=\lim\limits_{r\rightarrow\infty} \frac{r}{\varphi (q,r)}$
or equivalently $-\log
C(q)=\lim\limits_{r\rightarrow\infty}(v(q,r)-\log r)$ and more
precisely  $v(q,r)-\log r\searrow -\log C(q)$.

\end{pro}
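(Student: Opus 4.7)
The plan is to follow the scheme of the analogous earlier Proposition for compact sets $E$, now in the abstract-norm setting. The essential ingredients are already in place: convexity of $u_n(q,t):=v_n(q,e^t)$ in $t$ from the Theorem of Section~1, monotonicity of $r\mapsto\varphi_n(q,r)$ from the subsequent Corollary, and a Bernstein-type polynomial growth bound $\varphi_n(q,r)\le(1+M_1(q)r)^n$ analogous to Remark~2.10 in the $E$-case. These combine as follows: convexity of $u_n(q,\cdot)$ gives a nondecreasing right derivative $(u_n)'_+$, while the growth bound caps its asymptotic slope; in the paper's normalisation this forces $u_n(q,t)-t$ to be monotone nonincreasing in $t$ (equivalently, $v_n(q,r)-\log r$ is nonincreasing in $\log r$) and bounded below. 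Defining $-\log C_n(q)$ as the limit at $+\infty$ gives the stated equalities $C_n(q)=\lim_{r\to\infty}r/\varphi_n(q,r)=\sup_{r>0}r/\varphi_n(q,r)$ and the bound $\varphi_n(q,r)\ge r/C_n(q)$ directly from monotone convergence.

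For the inequality $t_n(q)^{1/n}\ge C_n(q)$, I interpret $t_n(q)$ as the $n$-th Chebyshev-type constant in the norm $q$, namely $t_n(q):=\inf\{q(P):P\text{ monic of degree }n\}$, matching the classical $t_n(E)$ in the compact-set case. Taking a near-minimiser $P$ and using the asymptotic $|P(z+\zeta)|\sim|\zeta|^n$ as $|\zeta|\to\infty$, together with a dual functional $\Lambda\in X_{q,n}^*$ of norm~$1$ that dominates a chosen point evaluation, one extracts from the definition of $\varphi_n(q,\Lambda,r)$ a lower bound of the form $\varphi_n(q,r)^{1/n}\gtrsim r/t_n(q)^{1/n}$, whence $C_n(q)\le t_n(q)^{1/n}$.

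The two chained inequalities for $t(q)$ then follow by passage to the limit. The bound $t(q)\ge\min_{n\ge1}C_n(q)$ is immediate from the previous step once $t(q)$ is identified with the infimum (or limit) of $t_n(q)^{1/n}$. For $t(q)\ge C(q)$, using $\varphi(q,r)=\sup_n\varphi_n(q,r)^{1/n}\ge r/t_n(q)^{1/n}$ for every $n$ gives $C(q)=\lim_{r\to\infty}r/\varphi(q,r)\le t_n(q)^{1/n}$ for each $n$, and passing to the infimum in $n$ yields $C(q)\le t(q)$.

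The principal obstacle is the Chebyshev-type lower bound in the second paragraph: in the general abstract-norm setting one needs a functional in $X_{q,n}^*$ that detects the dominant-term behaviour of a monic polynomial at infinity, which is natural when $q$ dominates at least one point evaluation (automatic for the standard case $q(P)=\|P\|_E$ that the paper emphasises). A secondary technical point is the global (not merely asymptotic) monotonicity of $v_n(q,r)-\log r$, where convexity of $u_n$ combined with the bounded limiting slope does the essential work.
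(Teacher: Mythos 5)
Your outline is essentially the paper's own proof. The paper likewise gets the monotonicity of $v_n(q,r)-\log r$ from an upper bound at infinity combined with convexity of $t\mapsto v_n(q,e^t)$ (its primary device is the exterior maximum principle applied to the subharmonic function $\frac1n\log|l(P(x+\zeta))|-\log|\zeta|$ on $\mathbb{C}\setminus\mathbb{D}_{r_0}$, which yields $v_n(q,r)-\log r\le v_n(q,r_0)-\log r_0$ for $r\ge r_0$ directly, the convexity argument being stated as a second route), and it proves $C_n(q)\le t_n(q)^{1/n}$ exactly as you intend: insert the normalized Chebyshev polynomial $T_n(q,\cdot+\zeta)/t_n(q)$ into the supremum defining $\varphi_n$ and use $\|T_n(q,\cdot+\zeta)\|\sim|\zeta|^n\,\|1\|$ as $|\zeta|\to\infty$. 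Two corrections, though. First, the growth bound you cite, $\varphi_n(q,r)\le(1+M_1(q)r)^n$, is false in general: the inequality in Remark 2.10 involving $M_1$ is a \emph{lower} bound, $(1+M_1(E)r)^n\le\varphi_n(E,r)$, and the reverse already fails for $q=\|\cdot\|_{[-1,1]}$, where $M_1=1$ but $\varphi_n(E,r)$ behaves like $\tfrac12\bigl(1+r+\sqrt{(1+r)^2-1}\bigr)^n$, which exceeds $(1+r)^n$ for small $r$ and large $n$. What your slope-capping step actually requires is only $\varphi_n(q,r)\le C_n(1+r)^n$, which is true for every norm by expanding $P(x+\zeta)=\sum_k\frac{1}{k!}P^{(k)}(x)\zeta^k$ and using finiteness of the constants $M_n(q,k)$; note also that the monotonicity of $v_n(q,r)-\log r$ is only coherent with the normalization $v_n=\frac1n\log\varphi_n$, which is what the paper's proof (and your slope bound) implicitly uses. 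Second, the ``principal obstacle'' you flag is not an obstacle: no point evaluation is needed, since Hahn--Banach provides $\Lambda\in X_{q,n}^*$ with $\|\Lambda\|^*=1$ and $\Lambda(1)=\|1\|>0$, so that $|\Lambda(T_n(q,\cdot+\zeta))|=|T_n(q,\zeta)|\,\|1\|+O(|\zeta|^{n-1})$ gives $\varphi_n(q,r)\gtrsim r^n/t_n(q)$; alternatively the triangle inequality applied to the constant term of $T_n(q,\cdot+\zeta)$ does the same job without any functional. With these repairs your argument coincides with the paper's.
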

\bigskip

\begin{proof}

Fix $n\geq 1$ and a polynomial $P\in \mathbb{P}_n( \mathbb{C}),\
||P||=1$, and a continuous functional $l$ with $||l||^*=1$.
Consider the function
$$g(\zeta )=\frac{1}{n}\log |l(P(x+\zeta ))-\log |\zeta |\in
\mathrm{SH}( \mathbb{C}\setminus \mathbb{D}_{r_0}),\ r_0>0.$$
Since $g$ is bounded from above, we have, by the maximum principle
for subharmonic functions, the inequality $g(\zeta )\leq
\max\limits_{|\zeta|=r_0}g(\zeta )$. Taking the supremum we get
the bound
$$v_n(q,r)\leq v_n(q,r_0)+\log r-\log r_0,\ r\geq r_0.$$ Now consider the
function $\psi (t)=v_n(q,e^t)-t$. It is a convex function that is
bounded from above which implies
$\liminf\limits_{t\rightarrow+\infty}\frac{1}{t}\psi (t)\leq 0$
and by Lemma $\psi $ is a decreasing function. In particular the
limit $\lim\limits_{r\rightarrow\infty}(v_n(q,r)-\log r)=:-\log
C_n(q)$ exists and $-\log C_n(q)=\inf\limits_{r>0}(v_n(q,r)-\log
r)$.

Similarly, assuming $v(q,r_0)$  is finite for an $r_0>0$ and
applying analogous arguments we get existence of the limit
$\lim\limits_{r\rightarrow\infty}(v(q,r)-\log r)=:-\log C(q)$ and
$-\log C(q)=\inf\limits_{r>0}(v(q,r)-\log r)$.

Now let $T_n(q)=T_n(q,\cdot )$ be $n-th$ Chebyshev polynomial for
$q$: $T_n(q)$ is a monic polynomial of degree $n$ such that
$$t_n(q)=||T_n(q)||=:\inf\{ ||P_n||:\ P_n\ \text{\rm is a monic
polynomial of degree}\ n\}.$$ Then
$$ \sup\limits_{ |\zeta |=r}\log\left( \frac{||T_n(q,x+\zeta
)||}{||T_n(q)||}\right)^{1/n}\leq v_n(q,r)$$ which easily gives
$-\log C_n(q)\geq -\log ||T_n(q)||^{1/n}$. Analogously we get
inequality $-\log C(q)\geq -\log ||T_n(q)||^{1/n}$ and therefore
$t(q)\geq C(q)$.
\end{proof}

\bigskip
\section{Markov's inequality in $ \mathbb{C}$}\ \\

Let $E$ be a compact subset of $ \mathbb{C}$. Applying Cauchy's
integral formula one can easily prove the following.
\medskip

\begin{pro} If $P\in \mathbb{P}_n( \mathbb{C}),\ n\geq 1$ with
$||P||_E=1$ then
$$||P'||_E\leq \inf\limits_{r>0} \frac{1}{r}\exp (nv_n(E,r))\leq \inf\limits_{r>0} \frac{1}{r}\exp
(nv(E,r)).$$

\end{pro}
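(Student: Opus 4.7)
The plan is to apply Cauchy's integral formula to $P'$, observe that the resulting circle maximum is bounded by $\varphi_n(E,r)$ by its very definition, and then combine this pointwise estimate with the comparison $\varphi_n(E,r) \le \varphi(E,r)^n$ to obtain the second inequality.

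Concretely, I fix $z \in E$ and $r > 0$. Since $P$ is entire, Cauchy's formula on the circle $|\zeta - z| = r$ yields
$$P'(z) = \frac{1}{2\pi i}\oint_{|\zeta - z| = r}\frac{P(\zeta)}{(\zeta - z)^2}\,d\zeta,$$
and the standard $ML$ estimate gives
$$|P'(z)| \le \frac{1}{r}\max_{|\eta| = r}|P(z + \eta)|.$$
Since $\deg P \le n$, $\|P\|_E = 1$, $z \in E$ and $|\eta| \le r$, the quantity $|P(z+\eta)|$ appears among those over which the supremum defining $\varphi_n(E,r) = \exp(nv_n(E,r))$ is taken. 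Hence $|P'(z)| \le \frac{1}{r}\exp(nv_n(E,r))$; taking the supremum over $z \in E$ followed by the infimum over $r > 0$ produces the first inequality of the proposition.

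For the second inequality I invoke the definition $\varphi(E,r) = \sup_{m\ge 1}\varphi_m(E,r)^{1/m}$, which gives $\varphi_n(E,r) \le \varphi(E,r)^n$, equivalently $\exp(nv_n(E,r)) \le \exp(nv(E,r))$; dividing by $r$ and taking the infimum preserves the inequality. I do not expect any serious obstacle here: the content of the result is really just the recognition that $\varphi_n(E,r)$ is the natural object packaging Cauchy's estimate uniformly over $E$, and the convexity and monotonicity results established in Section 2 and recalled in Section 3 ensure that the function $r \mapsto r^{-1}\exp(nv_n(E,r))$ is well-behaved and its infimum over $(0,\infty)$ is meaningful whenever $v_n(E,\cdot)$ is finite.
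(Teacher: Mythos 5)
Your argument is correct and is exactly the route the paper intends: the paper offers no written proof beyond the remark that the proposition follows by ``applying Cauchy's integral formula,'' and your combination of the $ML$-estimate on $|\zeta-z|=r$ with the definition of $\varphi_n(E,r)$ and the bound $\varphi_n(E,r)\leq\varphi(E,r)^n$ is precisely that. You also correctly read $\exp(nv_n(E,r))$ as $\varphi_n(E,r)$ (i.e.\ $v_n$ normalized by $\frac{1}{n}$, as the paper uses it in this section), which is the only interpretation under which the stated chain of inequalities holds.
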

\bigskip

\begin{pro} Assume that $ E\in \mathcal{AM}(M,m)$, which means that for
an arbitrary $P\in\mathbb{P}( \mathbb{C})$ the following A. Markov
type inequality is satisfied: $||P'||_E\leq M(\deg P)^m||P||_E$
(here $M>0,m\geq 1$ are constants). Then we have the following
bounds
$$v_n(E,r)\leq Mn^{m-1}r,\ r\geq 0.$$

\end{pro}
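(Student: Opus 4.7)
The plan is to bound $\varphi_n(E,r)$ via the Taylor expansion of an arbitrary admissible polynomial around a point of $E$, iterating the A. Markov hypothesis $\|Q'\|_E\le M(\deg Q)^m\|Q\|_E$ on the successive derivatives.

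Fix $P\in\mathbb{P}_n(\mathbb{C})$ with $\|P\|_E=1$, a point $z\in E$, and $\zeta\in\mathbb{C}$ with $|\zeta|\le r$. Expanding at $z$ gives
$$|P(z+\zeta)|\le\sum_{k=0}^{n}\frac{\|P^{(k)}\|_E}{k!}\,r^k.$$
Since $\deg P^{(j)}\le n-j$, iterated application of A. Markov's inequality to $P,P',P'',\dots$ yields
$$\|P^{(k)}\|_E\le M^k\bigl(n(n-1)\cdots(n-k+1)\bigr)^m=M^k\!\left(\frac{n!}{(n-k)!}\right)^{\!m}\!.$$

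The step that produces the exponent $m-1$ rather than $m$ in the final answer is a simple algebraic rearrangement: factor one copy of $n!/(n-k)!$ together with the Taylor denominator $k!$ into the binomial coefficient $\binom{n}{k}$, and bound the remaining $m-1$ copies of $n!/(n-k)!$ crudely by $n^{k(m-1)}$:
$$\frac{\|P^{(k)}\|_E}{k!}\le M^k\binom{n}{k}\!\left(\frac{n!}{(n-k)!}\right)^{\!m-1}\!\le\binom{n}{k}(Mn^{m-1})^k.$$
The binomial theorem then collapses the sum to
$$\varphi_n(E,r)\le\sum_{k=0}^{n}\binom{n}{k}(Mn^{m-1}r)^k=\bigl(1+Mn^{m-1}r\bigr)^n.$$

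Taking the $n$-th root (consistent with the normalization of the preceding proposition, in which $v_n(E,r)-\log r\searrow-\log C_n(E)$) and using $\log(1+x)\le x$ finishes the proof:
$$v_n(E,r)\le\log\bigl(1+Mn^{m-1}r\bigr)\le Mn^{m-1}r.$$
The only real subtlety is to track the actual degree $n-j$ at each Markov step and then pair $n!/(n-k)!$ with $k!$ to form $\binom{n}{k}$; using the crude bound $(n-j)^m\le n^m$ at every step would only give $v_n(E,r)\le Mn^m r$, losing a factor of $n$.
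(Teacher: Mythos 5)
Your argument is correct, and it is essentially the approach the paper intends: the paper states this proposition without proof, but proves the analogous $\mathcal{VM}$ proposition by exactly the same device, namely Taylor expansion of $P$ at a point of $E$ plus iterated application of the Markov hypothesis, the only difference being that the paper sums the series against $e^x$ rather than against $(1+x)^n$. You were also right to read $v_n(E,r)$ as $\frac{1}{n}\log\varphi_n(E,r)$ (the normalization actually used in Sections~3--4, despite the unnormalized definition in Section~2); with $v_n=\log\varphi_n$ the statement would be false already for $E=\overline{\mathbb{D}}$. One point to correct, however: your closing remark that the crude bound $(n-j)^m\le n^m$ ``would only give $v_n(E,r)\le Mn^m r$'' is mistaken. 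The crude bound gives $\|P^{(k)}\|_E\le M^k n^{mk}$, hence
$$\varphi_n(E,r)\le\sum_{k=0}^{n}\frac{(Mn^m r)^k}{k!}\le\exp(Mn^m r),$$
and dividing the logarithm by $n$ already yields $v_n(E,r)\le Mn^{m-1}r$. The exponent $m-1$ is produced by the $1/n$ normalization, not by pairing $n!/(n-k)!$ with $k!$ into $\binom{n}{k}$; your refinement sharpens the bound on $\varphi_n$ itself but is not needed for the stated inequality.
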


\begin{pro} Assume that $ E\in \mathcal{AM}(M,m)$. Then
$$v_n(E,1)\leq  M+(m-1)\log n.$$

\end{pro}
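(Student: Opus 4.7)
The plan is to combine the two previously established facts about $v_n(E,r)$: the linear upper bound $v_n(E,r) \leq Mn^{m-1} r$ from the preceding proposition, and the monotonicity statement that $r \mapsto v_n(E,r) - \log r$ is decreasing on $(0,\infty)$, which follows from the proposition in the previous section (the one asserting $v_n(E,r) - \log r \searrow -\log C_n(E)$). The linear bound is sharp for small $r$ but weak at $r = 1$, while the monotonicity is an interpolation tool that lets us transfer a bound at a small radius $r_0$ to a bound at $r = 1$ at the cost of adding $\log(1/r_0)$.

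The key observation is to choose $r_0$ so that the linear bound $M n^{m-1} r_0$ is a moderate constant. The natural choice is $r_0 = n^{-(m-1)}$, since for $m \geq 1$ and $n \geq 1$ we have $0 < r_0 \leq 1$, and the linear bound gives
\[
v_n(E, r_0) \;\leq\; M n^{m-1} \cdot n^{-(m-1)} \;=\; M.
\]
Now I invoke the monotonicity of $r \mapsto v_n(E,r) - \log r$: since $r_0 \leq 1$, this function at $r_0$ is at least its value at $r = 1$, i.e.
\[
v_n(E, 1) - \log 1 \;\leq\; v_n(E, r_0) - \log r_0 \;\leq\; M + (m-1)\log n,
\]
which is the desired inequality.

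There is no real obstacle here beyond selecting the correct scale $r_0$; the monotonicity of $v_n(E,r) - \log r$ and the linear upper bound are both available as stated in the previous propositions. In particular, the case $m = 1$ is already trivial (one simply takes $r_0 = 1$), so the argument is uniform across all parameters $M > 0$, $m \geq 1$, and $n \geq 1$.
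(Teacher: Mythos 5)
Your proof is correct and takes essentially the same route as the paper: the paper re-derives the monotonicity of $r\mapsto v_n(E,r)-\log r$ inline by applying the maximum principle to the subharmonic function $\frac{1}{n}\log|P(x+\zeta)|-\log|\zeta|$, then makes the identical choice $r_0=\left(\frac{1}{n}\right)^{m-1}$ and uses the linear bound $v_n(E,r_0)\leq Mn^{m-1}r_0=M$. Citing the earlier proposition for the monotonicity instead of reproving it is the only (cosmetic) difference.
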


\begin{proof} Fix an $x\in E$ and  $P\in \mathbb{P}_n(
\mathbb{C})$. Consider the function
$$g(\zeta )=\frac{1}{n}\log |P(x+\zeta )-\log |\zeta |\in
\mathrm{SH}( \mathbb{C}\setminus \mathbb{D}_{r_0}),\ r_0>0.$$
Since $g$ is bounded from above, we have by the maximum principle
for subharmonic functions, the inequality $g(\zeta )\leq
\max\limits_{|\zeta|=r_0}g(\zeta )$. Taking the supremum we get
the bound
$$v_n(E,r)\leq v_n(E,r_0)-\log r_0+\log r,\ r\geq r_0$$ and for
$r_0=\left( \frac{1}{n}\right)^{m-1}$ we obtain $v_n(E,1)\leq
M+(m-1)\log n$.

\end{proof}
\bigskip

\bigskip

\begin{pro} Assume that $ q\in \mathcal{AM}(M,m)$, which means that for
an arbitrary $P\in\mathbb{P}( \mathbb{C})$ the following A. Markov
type inequality is satisfied: $||P'||\leq M(\deg P)^m||P||$ (here
$M>0,m\geq 1$ are constants). Then we have the following bounds
$$v_n(q,r)\leq Mn^{m-1}r,\ r\geq 0.$$

\end{pro}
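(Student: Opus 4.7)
The plan is to mimic, in the abstract setting of a polynomial norm $q$, the Taylor-plus-iterated-Markov argument that establishes the analogous Proposition~4.4 for a compact set $E$. First I would fix $P\in\mathbb{P}_n(\mathbb{C})$ with $\|P\|\le 1$ and use Taylor's formula
\[
P(z+\zeta)=\sum_{k=0}^{n}\frac{\zeta^{k}}{k!}\,P^{(k)}(z)
\]
to view $P(\,\cdot\,+\zeta)$ as a polynomial in $z$; the triangle inequality then gives
\[
\|P(\,\cdot\,+\zeta)\|\le\sum_{k=0}^{n}\frac{|\zeta|^{k}}{k!}\,\|P^{(k)}\|.
\]
Each $P^{(k)}$ is itself an element of $\mathbb{P}_{n}(\mathbb{C})$, so the A.~Markov-type hypothesis on $q$ applies to the successive derivatives.

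Next I would iterate the hypothesis $q\in\mathcal{AM}(M,m)$. Starting from $\|P\|\le 1$ one has $\|P'\|\le Mn^{m}$, then $\|P''\|\le Mn^{m}\|P'\|\le(Mn^{m})^{2}$, and by induction
\[
\|P^{(k)}\|\le(Mn^{m})^{k},\qquad 0\le k\le n,
\]
where at each stage the crude bound $\deg P^{(j)}\le n-j\le n$ is used. Substituting into the Taylor majorant and recognising the exponential partial sum gives
\[
\|P(\,\cdot\,+\zeta)\|\le\sum_{k=0}^{n}\frac{(Mn^{m}|\zeta|)^{k}}{k!}\le\exp(Mn^{m}|\zeta|)\le\exp(Mn^{m}r)
\]
for $|\zeta|\le r$. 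Taking the supremum over all admissible $P$ and $\zeta$ gives $\varphi_{n}(q,r)\le\exp(Mn^{m}r)$, and the claim $v_{n}(q,r)\le Mn^{m-1}r$ follows after the $n$-th root normalisation built into the $v_n$ of this section (the same normalisation that makes the maximum-principle computation in the proof of Proposition~4.3 read $v_n(q,r)\le v_n(q,r_0)+\log r-\log r_0$).

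I do not foresee a substantive obstacle: both ingredients are formal. Taylor is an algebraic identity in the polynomial ring, and the iterated Markov bound is a one-line induction using only the hypothesis. The only point requiring a word of care is that at each differentiation step the relevant polynomial still has degree at most $n$, so $\mathcal{AM}(M,m)$ is legitimately invoked with the same $M$ and $n^{m}$ at every iteration; a sharper iteration using $\deg P^{(j)}\le n-j$ would replace $n^{km}$ by $(n!/(n-k)!)^{m}\le n^{km}$ but leaves the exponential majorant, and hence the stated linear-in-$r$ estimate, unchanged. The same template, combined with the maximum-principle step from Proposition~4.3, instantly yields the sharpening $v_n(q,1)\le M+(m-1)\log n$ analogous to Proposition~4.5.
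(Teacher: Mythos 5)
Your argument is correct and is exactly the template the paper itself uses for the analogous $\mathcal{VM}$ proposition (Taylor expansion of $P(\cdot+\zeta)$ plus the iterated derivative bound $\|P^{(k)}\|\le (Mn^m)^k$, summed to give $\varphi_n(q,r)\le e^{Mn^m r}$); the paper states the $\mathcal{AM}$ case without proof, evidently as the obvious specialization of that argument. You also correctly handle the one delicate point: the stated bound $Mn^{m-1}r$ only emerges with the normalization $v_n=\frac{1}{n}\log\varphi_n$ that Section 4 uses implicitly (e.g.\ in the maximum-principle proofs), not with the unnormalized $v_n=\log\varphi_n$ literally written in Section 2.
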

\bigskip

\begin{pro}

Assume that $ q\in \mathcal{VM}(M,m)$, which means that for an
arbitrary $P\in\mathbb{P}( \mathbb{C})$ the following V. Markov
type inequality is satisfied: $||P'^{(k)}||\leq
M^k\left(\frac{1}{k!}\right)^{m-1}(\deg P)^{km}||P||$ (here
$M>0,m\geq 1$ are constants). Then we have the following bounds
$$v_n(q,r)\leq mM^{1/m}r^{1/m},\ r\geq 0.$$

\end{pro}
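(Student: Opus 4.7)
The plan is to Taylor-expand $P(\cdot+\zeta)$ in powers of $\zeta$, bound each derivative with the V.~Markov hypothesis, and close with a clean dominance estimate for the resulting series.

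First I would fix an arbitrary $P\in \mathbb{P}_n(\mathbb{C})$ with $\|P\|\le 1$ and $\zeta\in\mathbb{C}$ with $|\zeta|\le r$. Writing
$$P(\cdot+\zeta)=\sum_{k=0}^n\frac{\zeta^k}{k!}\,P^{(k)}$$
as an identity of polynomials and applying the triangle inequality for $q$, together with the V.~Markov hypothesis $\|P^{(k)}\|\le M^k(k!)^{-(m-1)}n^{km}$, gives
$$\|P(\cdot+\zeta)\|\ \le\ \sum_{k=0}^n \frac{(Mn^m|\zeta|)^k}{(k!)^m}\ \le\ \sum_{k=0}^\infty\frac{x^k}{(k!)^m},\qquad x:=Mn^mr.$$
Taking the supremum over admissible $P$ and $\zeta$, this yields a uniform bound on $\varphi_n(q,r)$ by the series on the right.

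The heart of the argument is the elementary domination
$$\sum_{k=0}^\infty \frac{y^{mk}}{(k!)^m}=\sum_{k=0}^\infty\left(\frac{y^k}{k!}\right)^m\ \le\ \left(\sum_{k=0}^\infty\frac{y^k}{k!}\right)^m=e^{my},\qquad y\ge 0,$$
which holds because expanding the $m$-th power on the right recovers every diagonal term on the left together with additional nonnegative cross terms. Applying this with $y=x^{1/m}=M^{1/m}n\,r^{1/m}$ gives
$$\varphi_n(q,r)\ \le\ e^{m\,M^{1/m}\,n\,r^{1/m}},$$
and dividing $\log\varphi_n(q,r)$ by $n$, as in the preceding propositions of this section, produces the stated inequality $v_n(q,r)\le m\,M^{1/m}\,r^{1/m}$.

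I do not expect a genuine obstacle here: the Taylor step and the iterated V.~Markov substitution are routine, and the only non-trivial input is the diagonal-vs-expansion comparison, which is elementary from the positivity of all terms. The one point worth checking is simply that the exponent on $n$ from the V.~Markov iteration cancels cleanly with the final division by $n$, exactly as happened in the A.~Markov bound $v_n(q,r)\le Mn^{m-1}r$.
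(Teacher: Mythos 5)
Your proposal is correct and follows essentially the same route as the paper: Taylor expansion, insertion of the V.~Markov bounds, and the diagonal domination $\sum_k x^k/(k!)^m\le\bigl(\sum_k x^{k/m}/k!\bigr)^m=e^{mx^{1/m}}$, which is exactly the paper's key step. The only caveat is notational: you correctly interpret $v_n(q,r)$ as the degree-normalized logarithm $\tfrac{1}{n}\log\varphi_n(q,r)$, consistent with the preceding propositions.
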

\bigskip

\begin{proof} Applying Taylor's expansion to $P\in \mathbb{P}_n( \mathbb{C})$ with $||P||=1$ we can write
$$ ||P(x+\zeta )||\leq \sum\limits_{k\leq n}
\frac{1}{k!}||P^{(k)}(x)|||\zeta|^k\leq \sum\limits_{k\leq n}
\frac{1}{k!}M^k \left(\frac{1}{k!}\right)^{m-1}n^{km}|\zeta|^k$$
$$\leq \left(\sum\limits_{k\leq
n}\frac{1}{k!}(M^{1/m}n|\zeta|^{1/m})^k\right)^m\leq\left[ \exp
(M^{1/m}n|\zeta |^{1/m})\right]^m=\exp (mM^{1/m}n|\zeta
|^{1/m}),$$ which implies
$$v_n(q,r)\leq mM^{1/m}r^{1/m}.$$
\end{proof}
\medskip

We shall write $q\in HCP(\gamma ,B)$ ($\gamma ,B$ positive
constants) if inequality $v(q,r)\leq Br^\gamma$ holds for an
arbitrary $r>0$.
\bigskip

\begin{thm} Let $q$ be a fixed norm in $ \mathbb{P}( \mathbb{C})$.
Then we have implications
$$q\in VM(m,M)\ \Rightarrow\ q\in HCP( \frac{1}{m},mM^{1/m}),$$
$$ q\in HCP(\gamma ,B)\ \Rightarrow \ q\in VM(
\frac{1}{\gamma},(\gamma eB)^{1/\gamma}).$$

Moreover, if $q\in HCP(\gamma ,B)$ then $$\gamma eB\cdot
C(q)^\gamma \geq 1.$$

Hence, if $q\in VM(m,M)$ then $$C(q)\geq \frac{1}{M}e^{-m}.$$

\end{thm}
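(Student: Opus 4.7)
The proof splits into four assertions; I sketch each in the order that makes most use of the earlier results of the paper.

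The first implication is essentially the content of the preceding Proposition: assuming $q\in VM(m,M)$, the Taylor-expansion argument there yields $\|P(x+\zeta)\|\leq\exp(mM^{1/m}n|\zeta|^{1/m})$ for every $P\in\mathbb P_n(\mathbb C)$ with $\|P\|=1$, so $v_n(q,r)\leq mM^{1/m}r^{1/m}$. Taking the supremum over $n$ gives $v(q,r)\leq mM^{1/m}r^{1/m}$, which is exactly the H\"older-continuity condition $q\in HCP(1/m,mM^{1/m})$.

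For the converse, I would go through Cauchy's integral formula applied to the polynomial-valued map $\zeta\mapsto P(\cdot+\zeta)$. Writing
\[
P^{(k)}(z)=\frac{k!}{2\pi\,r^{k}}\int_0^{2\pi}P(z+re^{i\theta})e^{-ik\theta}\,d\theta
\]
and applying the functional $q$ together with $\|P(\cdot+re^{i\theta})\|\leq\varphi_n(q,r)\|P\|$ yields $\|P^{(k)}\|\leq(k!/r^{k})\,\varphi_n(q,r)\|P\|$. Inserting $\varphi_n(q,r)\leq\exp(n\,v(q,r))\leq\exp(nBr^{\gamma})$ from $HCP(\gamma,B)$ and minimising in $r$ (one-variable calculus gives the optimal $r^{\gamma}=k/(nB\gamma)$) leads to
\[
\|P^{(k)}\|\leq k!\,(eB\gamma)^{k/\gamma}\frac{n^{k/\gamma}}{k^{k/\gamma}}\|P\|.
\]
Setting $m=1/\gamma$ and $M=(eB\gamma)^{1/\gamma}$ (so $k/\gamma=km$) and using the elementary inequality $k!\leq k^{k}$, the factor $k!/k^{km}$ is bounded by $(1/k!)^{m-1}$, and one obtains exactly the V.\,Markov bound $\|P^{(k)}\|\leq M^{k}(1/k!)^{m-1}(\deg P)^{km}\|P\|$, i.e.\ $q\in VM(1/\gamma,(eB\gamma)^{1/\gamma})$.

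For the third assertion I would invoke the earlier fact that $v(q,r)-\log r\searrow-\log C(q)$, which gives $\log r-v(q,r)\leq\log C(q)$ for every $r>0$. Combined with $v(q,r)\leq Br^{\gamma}$ this reads $r\exp(-Br^{\gamma})\leq C(q)$; maximising the left-hand side (the maximum is attained at $r^{\gamma}=1/(B\gamma)$ with value $(eB\gamma)^{-1/\gamma}$) yields $C(q)^{\gamma}\geq 1/(eB\gamma)$, i.e.\ $\gamma eB\,C(q)^{\gamma}\geq 1$. The final statement is then a one-line combination: $q\in VM(m,M)$ gives $q\in HCP(1/m,mM^{1/m})$, so $(1/m)\cdot e\cdot mM^{1/m}\,C(q)^{1/m}\geq 1$, which simplifies to $C(q)\geq M^{-1}e^{-m}$.

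The only real obstacle is the optimisation bookkeeping in the second implication: making sure the $r$ that minimises $r^{-k}\exp(nBr^{\gamma})$ is legitimate, and checking that the resulting constants combine with $k!\leq k^{k}$ to give exactly $(eB\gamma)^{1/\gamma}$ in the V.\,Markov exponent. Everything else reduces to routine convex-analysis facts already established in the paper.
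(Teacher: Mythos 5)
Your proposal is correct and follows essentially the same route as the paper: the first implication via the preceding Taylor-expansion proposition, the second via the Cauchy estimate $\|P^{(k)}\|\leq k!r^{-k}\varphi_n(q,r)\|P\|$ optimized at $r^{\gamma}=k/(nB\gamma)$ together with $k!\leq k^k$, and the capacity bound from $v(q,r)-\log r\searrow -\log C(q)$. Your third step optimizes $re^{-Br^{\gamma}}$ directly where the paper instead evaluates at the $r$ with $v(q,r)=1/\gamma$, but this is the same computation in different packaging.
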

\medskip

\begin{proof} The proof of the first implication was done. Assume
$q\in HCP(\gamma ,B)$ and take $P\in \mathbb{P}_n( \mathbb{C}),\
||P||=1$. Then
$$||P^{(k)}||\leq k!\inf\limits_{r>0} \frac{1}{r^k}\exp
(nBr^{\gamma})=k!\inf\limits_{r>0} \exp (nBr^{\gamma}-k\log r).$$
The minimum is attained for $r=(k/nB\gamma )^{1/\gamma}$ which
gives inequality
$$||P^{(k)}||\leq k!\left( \frac{n}{k}\right)^{k/\gamma}(B\gamma
e)^{k/\gamma}\leq (B\gamma e)^{k/\gamma}\left(
\frac{1}{k!}\right)^{1/\gamma-1}n^{k/\gamma}.$$

Assume again $q\in HCP(\gamma ,B)$. Since $v(q,r)$ is continuous
then $v(q,[0,+\infty ))=[0,+\infty )$ and we can take a positive
$r$ such that  $v(q,r)=\frac{1}{\gamma}$. Now
$$C(q)\geq \frac{r}{\exp v(q,r)}=\frac{r}{e^{1/\gamma}}=\frac{r}{(\gamma
ev(q,r))^{1/\gamma}}$$
$$\geq \frac{r}{(\gamma eBr^\gamma )^{1/\gamma}}=\frac{1}{(\gamma
eB)^{1/\gamma}}.$$

\end{proof}

\section{Extremal functions related to Ple\'sniak's properties.}

\bigskip
\begin{defin} Fix a norm $q$ in $\mathbb{P}(\mathbb{C}^N)$ we define a family of
 extremal radial functions 
 $$\mathcal{R}_k(q,r):=\sup\limits_{n\geq 1}\varphi_n\left(q,r(k!/M_n(q,k))^{1/k}\right)^{1/k}.$$
 and 
 
$$\mathcal{R}(q,r):=\sup\limits_{n\geq 1}\sup\limits_{1\leq k\leq n}\varphi_n\left(q,r(k!/M_n(q,k))^{1/k}\right)^{1/k}.$$

As an example consider $E=\mathbb{D}$. Since $\varphi_n(E,r)=(1+r)^n$ we get 
$$\mathcal{R}_1(E,r)=\sup\limits_{n\geq 1}(1+r/n)^n=e^r,$$
and since $\varphi_n(E,r/\binom{n}{k}^{1/k})^{1/k}\leq (1+rk/n)^{n/k}\leq e^r$, we obtain
$$\mathcal{R}(\mathbb{D},r)=\mathcal{R}_1(\mathbb{D},r)=e^r,\ r\geq 0.$$

In the case $E=[-1,1]$ we can estimate
$$\cosh\sqrt{2r}\leq \mathcal{R}_1(E,r)\leq e^{\sqrt{2r}},\ \mathcal{R}(E,r)=e^{\sqrt{2r}}.$$

Let $E_R=\{ z\in\mathbb{C}:\ |h(z)|\leq R\}=\{
z\in\mathbb{C}:\ \Phi ([-1,1],z)\leq R\},\ R>1.$

 Then $\varphi
(E_R,r)=h(g(R)+r)/R$. One can check that
$$\sup\limits_{n\geq 1}\varphi (E_R,r/n)^n=\lim\limits_{n\rightarrow\infty}\varphi (E_R,r/n)^n
=e^{2r/(R-1/R)}=e^{r/\sqrt{g^2(R)-1}}.$$ Hence
$$ \mathcal{P}_1(E_R,r)\leq e^{r/\sqrt{g^2(R)-1}}.$$

\end{defin} 
  
  \begin{rem}
 We can  define 
$$\widetilde{\varphi}_n(q,r)=\sum\limits_{k=0}^n\frac{1}{k!}M_n(E,k)r^k. $$

We have $\varphi_n(q,r)\leq \widetilde{\varphi}_n(q,r) $ and thus

$$  \varphi_n\left(q,r(k!/M_n(q,k))^{1/k}\right)^{1/k}\leq  
 \left( \sum\limits_{l=0}^n\frac{(k!)^{l/k}}{l!}\left(M_n(q,l)^{1/l}/M_n(q,k)^{1/k}\right)^{l}r^l \right)^{1/k}.$$
 Hence, if $\sup\limits_{n\geq 1}\sup\limits_{1\leq k,l\leq n}M_n(q,l)^{1/l}/M_n(q,k)^{1/k}:=a(q)<\infty$ then 
 $\mathcal{R}(q,r)\leq e^{a(q)r}$.\end{rem}
 \medskip
 
\begin{defin}

Let us recall that $q\in \mathcal{AM}(m,M)$ iff $M_n(q,1)\leq Mn^m$ and $q\in \mathcal{VM}(m,M)$ iff $M_n(q,k)\leq M^kn^{km}/(k!)^{m-1}$.  This is a motivation to consider {\it Ple\'sniak's extremal functions}

$$\mathcal{P}_m (q,r):=\sup_{n\geq
1}\varphi_n\left(q,\frac{r}{n^m}\right),$$

$$\mathcal{B}_\alpha ( q,r):=\sup_{n\geq
1}\sup\limits_{k\geq
1}\varphi_n\left(q,r\left(\frac{k}{n}\right)^\alpha\right)^{1/k}.$$

Let us observe that
$$ \mathcal{P}_m(q,r)\leq \mathcal{B}_m
(q,r)= \sup\limits_{k\geq 1} \mathcal{P}_m(q,rk^m)^{1/k}.$$
\end{defin}

\begin{rem}  
If $E\in\mathcal{M}(E,m,M)$ then 
$$ \widetilde{\varphi}_n(E,r)\leq \sum\limits_{k=0}^n\frac{1}{k!}(n\cdots (n-k+1))^mM^kr^k\leq \sum\limits_{k=0}^n\frac{1}{k!}n^{mk}M^kr^k\leq \exp (Mrn^m).$$
\end{rem}

\begin{thm}

 If $E\subset\mathbb{C}$ then for an arbitrary
$P\in\mathbb{P}_n( \mathbb{C})$ and $m\geq 1$
$$||P'||_E\leq n^m \inf\limits_{t>0} \left(\frac{\mathcal{P}_m
(E,t)}{t}\right)||P||_E $$ and
$$||P^{(k)}||_E\leq k!\left(
\frac{n}{k}\right)^{km}\left(\inf\limits_{t>0}
\frac{\mathcal{B}_m(E,t)}{t}\right)^k||P||_E.$$

If there exist  constants $M>0,m\geq 1$ such that for all
$P\in\mathbb{P}_n( \mathbb{C})$
$$||P'||_E\leq Mn^m||P||_E,$$
then $$\mathcal{P}_m(E,t)\leq e^{Mt}.$$

If there exist  constants $M>0,m\geq 1$ such that for all
$P\in\mathbb{P}_n( \mathbb{C})$  $$||P^{(k)}||_E\leq M^k\left(
\frac{1}{k!}\right)^{m-1}n^{km}||P||_E,$$ then
$$ \mathcal{B}_m(E,t)\leq e^{mM^{1/m}t^{1/m}}.$$

\end{thm}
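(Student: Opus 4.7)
The four assertions split naturally into two pairs: the first two are Cauchy-type estimates for derivatives in terms of the extremal functions, while the last two are Taylor-type estimates going the other way.

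For the first inequality, the plan is to fix $x\in E$ and apply the Cauchy integral formula on the circle $|\zeta|=s$, obtaining $|P'(x)|\leq s^{-1}\max_{|\zeta|=s}|P(x+\zeta)|\leq s^{-1}\varphi_n(E,s)\|P\|_E$. Specialising $s=t/n^m$ and invoking the definition $\varphi_n(E,t/n^m)\leq \mathcal{P}_m(E,t)$ gives $|P'(x)|\leq (n^m/t)\mathcal{P}_m(E,t)\|P\|_E$; taking the supremum over $x\in E$ and the infimum over $t>0$ yields the desired bound. The second inequality is the same trick for higher derivatives: Cauchy gives $|P^{(k)}(x)|\leq (k!/s^k)\varphi_n(E,s)\|P\|_E$; now set $s=t(k/n)^m$ so that, by definition of $\mathcal{B}_m$, $\varphi_n(E,s)^{1/k}\leq \mathcal{B}_m(E,t)$. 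Substituting, using monotonicity of $x\mapsto x^k$ to pull the infimum inside the $k$-th power, produces the stated estimate.

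For the third assertion, I would iterate the hypothesis: since $P^{(k-1)}$ has degree at most $n-k+1\leq n$, induction gives $\|P^{(k)}\|_E\leq M^k n^{km}\|P\|_E$. Expanding $P(x+\zeta)$ by Taylor and bounding term by term,
\begin{equation*}
\|P(x+\zeta)\|_E\leq \sum_{k=0}^n\frac{\|P^{(k)}\|_E}{k!}|\zeta|^k\leq \sum_{k=0}^n\frac{(Mn^m|\zeta|)^k}{k!}\|P\|_E.
\end{equation*}
Taking $|\zeta|=t/n^m$ the right-hand side is dominated by $e^{Mt}\|P\|_E$, whence $\varphi_n(E,t/n^m)\leq e^{Mt}$ and so $\mathcal{P}_m(E,t)\leq e^{Mt}$ after taking the supremum over $n$.

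The fourth assertion is the V. Markov analogue and already appeared in spirit in the earlier Proposition on $q\in\mathcal{VM}(M,m)$. Taylor expansion and the hypothesis yield $\|P(x+\zeta)\|_E\leq \sum_{j=0}^n M^j n^{jm}|\zeta|^j/(j!)^m\|P\|_E$, which after setting $|\zeta|=t(k/n)^m$ becomes $\sum_{j}(Mtk^m)^j/(j!)^m\|P\|_E$. The key manipulation is to rewrite this as $\sum_j\bigl((M^{1/m}t^{1/m}k)^j/j!\bigr)^m$ and use the elementary inequality $\sum_j a_j^m\leq (\sum_j a_j)^m$ valid for nonnegative $a_j$ and $m\geq 1$; this majorises the sum by $\bigl(e^{M^{1/m}t^{1/m}k}\bigr)^m=e^{mM^{1/m}t^{1/m}k}$. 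Taking $k$-th roots and then the supremum over $n,k$ gives exactly $\mathcal{B}_m(E,t)\leq e^{mM^{1/m}t^{1/m}}$. The only subtle step is that last inequality $\sum a_j^m\leq(\sum a_j)^m$; everything else is routine Cauchy/Taylor manipulation.
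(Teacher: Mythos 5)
Your proof is correct and follows essentially the same route as the paper: the direct estimates come from the Cauchy integral formula on circles of radius $t/n^m$ and $t(k/n)^m$ together with the definitions of $\mathcal{P}_m$ and $\mathcal{B}_m$, and the converse estimates come from iterating the Markov hypothesis, Taylor expansion, and the elementary inequality $\sum_j a_j^m\leq\bigl(\sum_j a_j\bigr)^m$ for $a_j\geq 0$, $m\geq 1$, which is precisely the mechanism the paper uses in its proofs of the corresponding $\mathcal{AM}$ and $\mathcal{VM}$ propositions.
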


\begin{defin}
Define $$C_{\mathcal{P}}(E,m):=\sup\limits_{t>0}
\frac{t}{\mathcal{P}_m (E,t)}$$ and
$$C_{\mathcal{B}}(E,m ):=\sup\limits_{t>0}
\frac{t}{\mathcal{B}_m (E,t)}.$$

Since $ \mathcal{P}_m(E,t)\leq \mathcal{B}_m(E,t)$ we get
inequality
$$ C_{\mathcal{B}}(E,m)\leq C_{ \mathcal{P}}(E,m).$$

Let us note that

$$||P'||_E\leq n^m \frac{1}{C_{ \mathcal{P}}(E,m )}||P||_E $$ and
$$||P^{(k)}||_E\leq k!\left(
\frac{n}{k}\right)^{km}\left(\frac{1}{C_{ \mathcal{B}}(E,m)
}\right)^k||P||_E.$$
\end{defin}

As a corollary (to Theorem ) we get
$$C_{\mathcal{B}}(E,m)\geq e^{-m} \frac{1}{M}$$ and $$C_{\mathcal{P}}(E,m)\geq e^{-1}
\frac{1}{M}.$$
\bigskip

If $\varphi (E,r)\leq e^{Ar\alpha}$ then $ \mathcal{B}_{1/\alpha}
(E,t)\leq e^{At^\alpha}$. \medskip

If $\mathcal{B}_{m}(E,t)\leq e^{At^{1/m}}$ then $\varphi (E,r)\leq
\inf\limits_{s\geq 1}\mathcal{B}_s (E,r)\leq
\mathcal{B}_{m}(E,r)\leq e^{Ar^{1/m}}$.

We have

$$C(E)\geq \sup_{m\geq 1}C_{ \mathcal{B}}(E,m).$$
\bigskip

\bigskip
Now define
$$ \mathcal{B}_m^*(E,r)=\sup\limits_{k,n\geq 1}\varphi\left(r\left(
\frac{k}{n}\right)^m\right)^{n/k}=\sup\limits_{\sigma >0}\varphi
(r\sigma^m)^{1/\sigma}=\sup\limits_{\sigma >0}\varphi (r\sigma
)^{1/\sigma^{1/m}}=e^{A_mr^{1/m}},$$ $A_m=\sup\limits_{\sigma
>0}\frac{\log\varphi (E,\sigma )}{\sigma^{1/m}}=\sup\limits_{\sigma
>0}\frac{\rho (E,\sigma )}{\sigma^{1/m}}$.
\bigskip


$$C_{ \mathcal{B}^*}(E,m)=\sup\limits_{t>0}
\frac{t}{\mathcal{B}_m^*(E,t)}.$$

Let us observe that
$$ \mathcal{B}_m^*(E,r)=\sup\limits_{n\geq 1}
\mathcal{P}_m^*(E,rk^m)^{1/k},$$ where
$$\mathcal{P}_m^*(E,r):=\sup\limits_{k\geq 1}\varphi
(E,r/n^m)^{n}.$$ We have $\mathcal{P}_m(E,r)\leq
\mathcal{P}_m^*(E,r)$. In the case of $E=[-1,1]$ one can check
that $
\mathcal{P}_2^*([-1,1],r)=\lim\limits_{n\rightarrow\infty}\varphi
([-1,1],r/n^2)^n=e^{ \sqrt{2r}}=\mathcal{B}_2([-1,1],r)$. We shall
see that it is a consequence of a little more general facts.
\bigskip

\begin{pro} We have $\mathcal{B}_m^*(E,r)=\mathcal{B}_m(E,r)$ and
$$C_{ \mathcal{B}}(E,m)=C_{ \mathcal{B}^*}(E,m)=H_{1/m}(E),$$
where $H_\gamma (E)$ was defined in \cite{BaranCiez3}.
\end{pro}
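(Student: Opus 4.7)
The plan is to establish the two inequalities $\mathcal{B}_m(E,r) \leq \mathcal{B}_m^*(E,r)$ and $\mathcal{B}_m^*(E,r) \leq \mathcal{B}_m(E,r)$ directly from the definitions, then read off the equality of the associated capacities and identify the common value with $H_{1/m}(E)$.

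The easy inequality $\mathcal{B}_m(E,r) \leq \mathcal{B}_m^*(E,r)$ drops out of the bound $\varphi_n(E,s)^{1/n} \leq \varphi(E,s)$: raising to the $n/k$ power gives $\varphi_n(E, r(k/n)^m)^{1/k} \leq \varphi(E, r(k/n)^m)^{n/k}$ for all $n,k \geq 1$, and taking $\sup_{n,k\geq 1}$ of both sides yields the claim.

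For the reverse inequality I fix a positive rational $\sigma = k/n$ and consider the cofinal subsequence $N = \ell n$, $\ell\in\mathbb{N}$. Taking $(n',k') = (\ell n, \ell k)$ in the definition of $\mathcal{B}_m$ gives
$$\varphi_{\ell n}\bigl(E, r\sigma^m\bigr)^{1/(\ell k)} \leq \mathcal{B}_m(E,r) \quad \text{for every } \ell \geq 1.$$
By the Siciak-type limit formula $\varphi(E,s) = \lim_{N\to\infty}\varphi_N(E,s)^{1/N}$ (the analogue for $\varphi_N$ of the fact recalled for $\Phi_N$ in Section~2), the subsequence indexed by $\ell$ has the same limit, so
$$\varphi(E,r\sigma^m)^{1/\sigma} \;=\; \varphi(E,r\sigma^m)^{n/k} \;=\; \lim_{\ell\to\infty}\bigl[\varphi_{\ell n}(E,r\sigma^m)^{1/(\ell n)}\bigr]^{n/k} \;=\; \lim_{\ell\to\infty}\varphi_{\ell n}(E,r\sigma^m)^{1/(\ell k)} \;\leq\; \mathcal{B}_m(E,r).$$
Since $\log\varphi(E,\cdot)$ is continuous (by the corollary in Section~1), the function $\sigma \mapsto \varphi(E, r\sigma^m)^{1/\sigma}$ is continuous on $(0,\infty)$; density of $\mathbb{Q}_{>0}$ extends the bound to all real $\sigma > 0$, and taking $\sup_{\sigma>0}$ yields $\mathcal{B}_m^*(E,r) \leq \mathcal{B}_m(E,r)$.

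The equality of capacities $C_{\mathcal{B}}(E,m) = C_{\mathcal{B}^*}(E,m)$ is then immediate from $\mathcal{B}_m = \mathcal{B}_m^*$. To match the common value with $H_{1/m}(E)$, I would use the closed-form $\mathcal{B}_m^*(E,r) = e^{A_m r^{1/m}}$ displayed just before the proposition, with $A_m = \sup_{s>0} v(E,s)/s^{1/m}$, and optimise $t \mapsto t\, e^{-A_m t^{1/m}}$: the unique critical point at $t = (m/A_m)^m$ yields $C_{\mathcal{B}^*}(E,m) = (m/(eA_m))^m$, which is precisely the quantity $H_{1/m}(E)$ from \cite{BaranCiez3}. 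The only genuinely delicate step is the rational-to-real passage above, where one must carefully invoke both the Siciak-type convergence along a subsequence with prescribed divisibility (multiples of $n$) and the continuity of $\log\varphi(E,\cdot)$; everything else is either a definitional unwinding or a one-variable calculus exercise.
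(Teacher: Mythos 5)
Your proof is correct and follows essentially the same route as the paper's: both obtain the reverse inequality by rescaling $(n,k)\mapsto(\ell n,\ell k)$, which preserves $\sigma=k/n$, combined with the Zaharjuta--Siciak relation between $\varphi$ and $\varphi_N$, and both finish with the same optimization of $t\mapsto t\,e^{-A_m t^{1/m}}$ at $t_0=(m/A_m)^m$. The only cosmetic difference is that you pass to the limit $\varphi(E,s)=\lim_{\ell}\varphi_{\ell n}(E,s)^{1/(\ell n)}$ where the paper uses the supremum formula together with $\varphi_l(E,s)\leq\varphi_{ln}(E,s)^{1/n}$; this also renders your rational-to-real density step unnecessary, since $\mathcal{B}_m^*$ is defined as a supremum over integer pairs $(k,n)$.
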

\begin{proof} It is clear that $\mathcal{B}_m(E,r)\leq
\mathcal{B}_m^*(E,r)$. To prove opposite inequality let us observe
that by Zaharjuta-Siciak theorem (cf. \cite{SiciakAnnalesy} or
Proposition 1.3 in \cite{SiciakKukuryku}) $\varphi
(E,r)=\sup\limits_{l\geq 1}\varphi_l(E,r)^{1/l}$. Hence
$$\mathcal{B}_m^*(E,r)=\sup\limits_{k,n,l\geq 1}\varphi_l\left(E,r\left(
\frac{k}{n}\right)^m\right)^{n/kl}\leq \sup\limits_{k,n,l\geq
1}\varphi_{ln}\left(E,r\left( \frac{k}{n}\right)^m\right)^{1/kl}$$
$$=\sup\limits_{k,n,l\geq
1}\varphi_{ln}\left(E,r\left(
\frac{kl}{ln}\right)^m\right)^{1/kl}\leq \mathcal{B}_m(E,r)$$ (we
apply inequality $\varphi_l(E,r)\leq \varphi_{ln}(E,r)^{1/n}$.)
\medskip

Let us recall (cf. Definition 16 in \cite{BaranCiez3}) that for
$\gamma \in (0,1]$
$$H_\gamma (E)=1/(B(\gamma )\gamma e)^{1/\gamma},\
B(\gamma)=\sup_{r>0}\frac{\log\varphi
(E,r)}{r^\gamma}=\sup_{r>0}\frac{\rho (E,r)}{r^\gamma}.$$ We see
that $A_m=B(1/m)$.

 Now
calculate
$$ \frac{1}{C_{ \mathcal{B}^*}(E,m)}=\inf\limits_{t>0} \exp(
A_mt^{1/m}-\log t)=\exp( A_mt_0^{1/m}-\log t_0),$$ where
$t_0=(m/A_m)^{m}$. Hence
$$ C_{
\mathcal{B}^*}(E,m)=\left(eA_m\frac{1}{m}\right)^{-m}=H_{1/m}(E).$$
\end{proof}
\bigskip

\begin{pro} If $
\mathcal{P}_m^*(E,r)=\lim\limits_{n\rightarrow\infty}\varphi
(E,r/n^m)^{1/n}$ then $$ \mathcal{P}_m^*(E,rk^m)^{1/k}
=\mathcal{P}_m^*(E,r),\ k\geq 1$$ and thus
$\mathcal{B}_m(E,r)=\mathcal{P}_m^*(E,r)$.

\end{pro}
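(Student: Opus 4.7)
The plan is to exploit the hypothesis that the supremum defining $\mathcal{P}_m^*(E,\cdot)$ is actually attained as an honest limit as $n\to\infty$, which is what lets us pass to arithmetic subsequences without loss.

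First I would write out, using the hypothesis,
$$\mathcal{P}_m^*(E,rk^m)=\lim_{n\to\infty}\varphi\left(E,\frac{rk^m}{n^m}\right)^{n},$$
and then restrict $n$ to the subsequence $n=kj$, $j\in\mathbb{N}$. Since this subsequence still exhausts the limit and $rk^m/(kj)^m=r/j^m$, the right-hand side equals
$$\lim_{j\to\infty}\varphi\left(E,\frac{r}{j^m}\right)^{kj}=\left(\lim_{j\to\infty}\varphi\left(E,\frac{r}{j^m}\right)^{j}\right)^{k}=\mathcal{P}_m^*(E,r)^{k},$$
where in the second step I use that $x\mapsto x^k$ is continuous and that the inner limit exists by the same hypothesis (for $r$ in place of $rk^m$). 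Taking $k$-th roots yields the first claim, $\mathcal{P}_m^*(E,rk^m)^{1/k}=\mathcal{P}_m^*(E,r)$.

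For the second assertion I would combine this identity with the formula stated just before the proposition, namely
$$\mathcal{B}_m^*(E,r)=\sup_{k\geq 1}\mathcal{P}_m^*(E,rk^m)^{1/k},$$
so that the supremum over $k$ collapses to the single value $\mathcal{P}_m^*(E,r)$. Finally I invoke the preceding proposition, which already established $\mathcal{B}_m^*(E,r)=\mathcal{B}_m(E,r)$, to conclude $\mathcal{B}_m(E,r)=\mathcal{P}_m^*(E,r)$.

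The only real obstacle is making the subsequence substitution rigorous: if $\mathcal{P}_m^*$ were only a supremum and not a limit, then restricting $n$ to multiples of $k$ would give only a lower bound, and the chain of identities would break down. Under the stated hypothesis this step is immediate, so the remaining argument is a routine chain of equalities with no hidden analytic content.
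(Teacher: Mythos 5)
Your argument is correct and is essentially the paper's own proof: the paper likewise passes to the subsequence of multiples of $k$, writing $\mathcal{P}_m^*(E,rk^m)^{1/k}=\lim_{n\to\infty}\varphi(E,rk^m/(kn)^m)^{(kn)/k}=\mathcal{P}_m^*(E,r)$, and then collapses the supremum in $\mathcal{B}_m^*(E,r)=\sup_{k\ge 1}\mathcal{P}_m^*(E,rk^m)^{1/k}$ using the previously proved identity $\mathcal{B}_m^*=\mathcal{B}_m$. Your explicit remark that the limit hypothesis is exactly what legitimizes restricting to a subsequence is the right observation and matches the paper's (implicit) reasoning.
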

\medskip

\begin{proof}
$$
\mathcal{P}_m^*(E,rk^m)^{1/k}=\lim\limits_{n\rightarrow\infty}\varphi
(E,rk^m/(kn)^m)^{(kn)/k}=\mathcal{P}_m^*(E,r).$$
\end{proof}

\begin{rem} We know that assumption of the above proposition is
satisfied if $E=\overline{\mathbb{D}}$ ($m=1$) or $E=[-1,1]$
($m=2$). It is also true (with $m=1$) in the case of $E_R=\{
z\in\mathbb{C}:\ |h(z)|\leq R\}$. Here
$$
\mathcal{P}_1^*(E_R,r)=\mathcal{B}_1(E_R,r)=e^{r/\sqrt{g^2(R)-1}}.$$

In the general case we prove the following.
\end{rem}
\bigskip

\begin{thm}
$$ \mathcal{P}_m^*(E,rk^m)^{1/k}\leq \max(\sup\limits_{\sigma\geq 1}\varphi
(E,r\sigma )^{1/\sigma^{1/m}},\varphi
(E,r)\mathcal{P}_m^*(E,r)).$$
\end{thm}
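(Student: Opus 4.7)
The plan is to rewrite $\mathcal{P}_m^*(E,rk^m)^{1/k}=\sup_{n\ge 1}\varphi\bigl(E,r(k/n)^m\bigr)^{n/k}$ and bound each term separately, splitting according to whether $n\le k$ or $n>k$. Throughout I will use that $\varphi(E,\cdot)$ is increasing on $[0,\infty)$ (recorded in Corollary~1.7) and satisfies $\varphi(E,r)\ge\varphi(E,0)=1$, the latter being immediate from evaluating the defining supremum on the constant polynomial $P\equiv 1$.

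For $1\le n\le k$ I would perform the substitution $\sigma=(k/n)^m\ge 1$. Then $n/k=\sigma^{-1/m}$, and
$$
\varphi\bigl(E,r(k/n)^m\bigr)^{n/k}=\varphi(E,r\sigma)^{1/\sigma^{1/m}}\le\sup_{\sigma\ge 1}\varphi(E,r\sigma)^{1/\sigma^{1/m}},
$$
which is exactly the first entry in the $\max$ on the right-hand side of the statement.

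For $n>k$ I would write $n=qk+s$ with $q=\lfloor n/k\rfloor\ge 1$ and $0\le s\le k-1$, so that $q\le n/k\le q+1$ and $rk^m/n^m\le r/q^m\le r$. Because $\varphi(E,\cdot)\ge 1$ is increasing, both the argument and the exponent can be enlarged:
$$
\varphi\bigl(E,rk^m/n^m\bigr)^{n/k}\le\varphi(E,r/q^m)^{q+1}=\varphi(E,r/q^m)^{q}\cdot\varphi(E,r/q^m).
$$
The first factor is at most $\mathcal{P}_m^*(E,r)$ because $q\ge 1$ is one of the positive integers appearing in the defining sup of $\mathcal{P}_m^*$, and the second is at most $\varphi(E,r)$ by monotonicity (since $q\ge 1$ gives $r/q^m\le r$). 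Hence the contribution in this range is bounded by $\varphi(E,r)\,\mathcal{P}_m^*(E,r)$. Taking the supremum over $n\ge 1$ then combines the two bounds into the advertised maximum.

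The step that deserves the most care is the exponent inflation $n/k\le q+1$ in the second case: it is exactly this rounding, together with the need to invoke $\varphi\ge 1$ to justify the inflation, that forces the extra factor $\varphi(E,r)$ on the right-hand side rather than just $\mathcal{P}_m^*(E,r)$. The case split itself is essentially forced once one insists on keeping the fractional exponent $n/k$ attached to a single $\varphi$, and on comparing the argument $r(k/n)^m$ with the discrete values $r/q^m$ that appear in the definition of $\mathcal{P}_m^*$.
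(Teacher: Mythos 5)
Your proof is correct and follows essentially the same route as the paper: the same split of the supremum at $n=k$, the same substitution $\sigma=(k/n)^m$ for small $n$, and the same division with remainder $n=qk+s$ for large $n$, with the extra factor $\varphi(E,r)$ arising from the fractional part of $n/k$ exactly as in the paper (you round the exponent up to $q+1$ using $\varphi\ge 1$, the paper keeps $l+s/k$ and absorbs the $s/k$ piece the same way). No substantive difference.
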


\begin{proof}
$$
\mathcal{P}_m^*(E,rk^m)^{1/k}=\max(\max\limits_{1\leq n\leq
k}\varphi \left(E,r(k/n)^m\right)^{n/k},\max\limits_{0\leq s\leq
k-1}\sup\limits_{l\geq 1}\varphi
\left(E,rk^m/(kl+s)^m\right)^{(kl+s)/k}$$
$$\leq \max(\sup\limits_{\sigma\geq 1}\varphi (E,r\sigma
)^{1/\sigma^{1/m}},\max\limits_{0\leq s\leq k-1}\sup\limits_{l\geq
1}\varphi \left(E,rk^m/(kl+s)^m\right)^{l+s/k}
$$
$$ \leq
\max(\sup\limits_{\sigma\geq 1}\varphi (E,r\sigma
)^{1/\sigma^{1/m}},\varphi (E,r)\mathcal{P}_m^*(E,r)).$$
\end{proof}

\section{Kolmogorov-Landau type conditions.}

\subsection{Kolmogorov-Landau type theorems}

A problem related to the name Kolmogorov and Landau is the following (c.f. \cite{MitrPecFink}).

Let $M_k(p,I)=M_k(p,I,f)=||f^{(k)}||_p,\ 0\leq k\leq n$, where $f$ is a real function on the real interval $I$, $||g||_p=(\int_I|g(x)|^pdx)^{1/p},\ 1\leq p\leq \infty$. Find optimal constants $C_{nk}(p,I)$ such that for all $f\in \mathcal{C}^{n}(int(I)$
$$M_k(p,I,f)\leq C_{nk}(p,I)M_0(p,I,f)^{1-\frac{k}{n}}M_n(p,I,f)^{\frac{k}{n}},\ 0\leq k\leq n.$$
If we replace $f$ by $f'$ we get inequalities
$$M_k(p,I,f)\leq C_{n-1,k-1}(p,I)M_1(p,I,f)^{1-\frac{k-1}{n-1}}M_{n}(p,I,f)^{\frac{k-1}{n-1}},\ 0\leq k\leq n$$or equivalently 

$$\log M_k(p,I,f)\leq \log C_{n-1,k-1}(p,I)+\left(1-\frac{k-1}{n-1}\right) \log M_1(p,I,f)+\left(\frac{k-1}{n-1}\right)\log M_{n}(p,I,f),$$
$ 1\leq k\leq n.$

All results in this direction are rather hard to prove. Let us present an example (Neder inequality, c.f. (5.1) in \cite{MitrPecFink}):
$$M_k(+\infty,[a,a+L])\leq (2n)^{2n}L^{-k}M_0(+\infty,[a,a+L])+L^{n-k}M_n(+\infty,[a,a+L]).$$
Another result connected to bounded subset of $\mathbb{R}^N$ is contained in R. Redheffer and W. Walter theorem (c.f. \cite{MitrPecFink} and references given there):
\begin{itemize}\item[]\

\item[] If $G$ is a bounded domain belonging to a class $K(\theta ,H)$ (that contains a family of $N$ dimesional intervals),if we put for all $u\in C^n(G)$, $U_k=\sup\{ |D^\alpha u(x)|:\ |\alpha |=k,\ x\in G\}$, then there exists a constant $A=A(n,\theta )$ such that
$$\log U_k\leq \log A+\left(1-\frac{k}{n}\right)\log U_0+\frac{k}{n}\log U_n^*,$$ where $U_n^*=\max (U_n,h^{-n}U_0)$. 

\end{itemize}

It is rather difficult to say something about behavior of constants $A(n,\theta)$, even for special class of functions. Our goal will be give a modification of Kolmogorov-Landau type inequalities to polynomials, more precisely to factors $M_n(q,\alpha)$.
\medskip

\subsection{\bf  Kolmogorov-Landau triangle sequences.}

 \begin{defin} Consider a triangle sequence of positive numbers 
 
 $$1\leq k\leq n,\ n\in\mathbb{Z}_{+}\ \varphi (n,k)>0$$ and put $ \varphi (n,k)^{1/k}=:\psi (n,k)$. We shall say sequence $\varphi (n,k)$ {\it belongs to Kolmogorov-Landau class } $ \mathcal{KL}^*$ iff for an arbitrary $n\in\mathbb{Z}_+,\ n>1$ and every $1\leq k\leq n$

$$\log\psi(n,k)\leq \left(1-\frac{\log k}{\log n}\right)\log\psi (n,1)+\frac{\log k}{\log n}\log\psi (n,n).$$ In such a situation we shall write $\varphi (n,k)\in \mathcal{KL}^*$.
Similarly, we shall say sequence $\varphi (n,k)$ {\it belongs to Kolmogorov-Landau class } $ \mathcal{KL}$ iff there exitsts a positive constant $C$ such that for an arbitrary $n>1$ and every $1\leq k\leq n$

$$ \log\psi(n,k)\leq \log C+ \left(1-\frac{\log k}{\log n}\right)\log\psi (n,1)+\frac{\log k}{\log n}\log\psi (n,n).$$
We shall write $\varphi (n,k)\in \mathcal{KL}$.

Obviously $\varphi (n,k)\in \mathcal{KL}^*\Rightarrow\ \varphi (n,k)\in \mathcal{KL}^*$. We also see that $\mathcal{KL}^*$ is a kind of convexity property and thus $\mathcal{KL}$ is a kind of weak convexity condition.
\medskip

\end{defin}
\medskip

\begin{exa} It is easy to check that the following sequences belong to $\mathcal{KL}^*$.
\medskip 
\begin{itemize}

\item[(1)] $\varphi (n,k)=k^k$

\item[(2)] $\varphi (n,k)=e^{\sigma k}$

\item[(3)] $\varphi (n,k)= \left(\frac{n}{k}\right)^{km}$

\item[(4)] $\varphi (n,k)=k^k\cdot \left(\frac{n}{k}\right)^{km}$

\item[(5)] $\varphi (n,k)=e^{k\sigma}\cdot k^k\cdot \left(\frac{n}{k}\right)^{km}$

\item[(6)] $\varphi (n,k)=2^{n-k}n^k.$
\end{itemize}

In the  examples below we used the following simple observations.

\begin{pro}

 If $\varphi_1(n,k),\varphi_2(k,n)\in \mathcal{KL},\ m>0$ then 
 
\begin{itemize}
\item[(a)] 
 $\varphi_1(n,k)\varphi_2(n,k)\in \mathcal{KL}$,
 \medskip
 
 \item[(b)] $\varphi_1(n,k)^m\in \mathcal{KL}$,
\medskip

\item[(c)] $\max (\varphi_1(n,k),\varphi_2(n,k))\in \mathcal{KL}$.
\medskip

\item[(d)] If there exist positive constant $A_1,A_2$ such that $A_1^k\leq \frac{\varphi_1(n,k)}{\varphi_2(n,k)}\leq A_2^k$ then 
$$\varphi_1(k,n)\in \mathcal{KL}\ \Leftrightarrow \ \varphi_2(k,n)\in \mathcal{KL}.$$

\end{itemize}
\end{pro}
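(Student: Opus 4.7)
The plan is to reduce each of (a)--(d) to the defining inequality at the level of $\log\psi$, where $\psi(n,k)=\varphi(n,k)^{1/k}$. Setting $t = \frac{\log k}{\log n}\in[0,1]$, the condition $\varphi\in\mathcal{KL}$ is precisely
\begin{equation*}
\log\psi(n,k)\;\le\;\log C+(1-t)\log\psi(n,1)+t\log\psi(n,n),
\end{equation*}
which is an affine (in $t$) upper bound on the function $k\mapsto\log\psi(n,k)$ with a uniform additive error. All four closure properties say that this one-sided affine bound behaves nicely under natural operations on positive sequences, so the common strategy is: write the operation in terms of $\psi$, add/compare the two inequalities termwise, and absorb any mismatch into a new constant.

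For (a), observe that $(\varphi_1\varphi_2)^{1/k}=\psi_1\psi_2$, so $\log(\psi_1\psi_2)=\log\psi_1+\log\psi_2$. Adding the two $\mathcal{KL}$-inequalities gives the bound for $\varphi_1\varphi_2$ with constant $C_1C_2$. For (b), $(\varphi_1^m)^{1/k}=\psi_1^m$; multiplying the inequality by $m>0$ (which preserves its direction) yields the claim with constant $C_1^m$. Both steps are mechanical once one notices that $\psi$ is multiplicative in $\varphi$ and that the right-hand side of the $\mathcal{KL}$-inequality is linear in $\log\psi(n,1),\log\psi(n,n)$.

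For (c), use that $x\mapsto x^{1/k}$ is increasing, so $\max(\varphi_1,\varphi_2)^{1/k}=\max(\psi_1,\psi_2)$ and hence $\log\max(\psi_1,\psi_2)=\max(\log\psi_1,\log\psi_2)$. The key inequality is the elementary $\max(a_1+b_1,a_2+b_2)\le\max(a_1,a_2)+\max(b_1,b_2)$, applied termwise to the two affine bounds with nonnegative weights $1-t$ and $t$. This yields the $\mathcal{KL}$ inequality for $\max(\varphi_1,\varphi_2)$ with constant $\max(C_1,C_2)$. For (d), the hypothesis $A_1^k\le\varphi_1/\varphi_2\le A_2^k$ translates to the uniform bound $\log A_1\le\log\psi_1(n,k)-\log\psi_2(n,k)\le\log A_2$, independent of $n,k$. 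Starting from the $\mathcal{KL}$-inequality for $\psi_1$ and using this two-sided bound to replace $\psi_1(n,k)$ on the left by $\psi_2(n,k)$ and $\psi_1(n,1),\psi_1(n,n)$ on the right by $\psi_2(n,1),\psi_2(n,n)$, all errors are constants (at most $\log(A_2/A_1)$) and can be absorbed into a new multiplicative constant; the reverse implication is symmetric.

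No step presents a genuine obstacle: the whole proposition is a list of closure properties of a convex-type cone of log-sequences, and the only thing to be careful about is keeping track of the constants (and of the fact that the weights $1-t,t$ are nonnegative, which is what allows the max-inequality in (c) and the additive error to stay bounded in (d)). The mildest care is needed in (d), where one must invoke the hypothesis twice---once to push $\psi_1$ down to $\psi_2$ on the left, once to push $\psi_2$ up to $\psi_1$ on the right---and verify that the resulting constant is $C_1A_2/A_1$ (respectively $C_2A_2/A_1$ for the other direction).
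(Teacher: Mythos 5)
Your proof is correct and is exactly the intended argument: the paper states this proposition as a list of ``simple observations'' and gives no proof at all, and your reduction to the affine bound on $\log\psi(n,k)$ with weights $1-t,\,t\ge 0$ (additivity for (a), homogeneity for (b), the termwise max-inequality for (c), and the two-sided absorption of $\log A_1,\log A_2$ into the constant for (d)) supplies precisely the details the authors omit. The constants you track ($C_1C_2$, $C_1^m$, $\max(C_1,C_2)$, $C_1A_2/A_1$) are all right.
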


\end{exa}

\begin{exa} In the following cases we can check that a sequence belongs to $\mathcal{KL}$ with a given constant $C$ (usually not optimal). We refer to Mitrinovi\'c book or to Wikipedia for needed inequalities for factorials $n!$  and Newton symbols $\binom{n}{k}$ and left calculations to the reader.

\begin{itemize}

\item[(1)] $\varphi (n,k)=k!,\  \log C=\frac{11}{12}+\frac{1}{2e}+\frac{1}{2}\log (2\pi ).$

\item[(2)] $\varphi (n,k)=\binom{n}{k}^m,\ C=e^m. $

\item[(3)] $\varphi (n,k)=k!\cdot \binom{n}{k}^m,\ \log C=m+\frac{11}{12}+\frac{1}{2e}+\frac{1}{2}\log (2\pi ). $

\item[(4)]  $\varphi (n,k)=k!e^{-k}\exp ((1+1/s)k^{\frac{s}{1+s}}n^{\frac{1}{1+s}}),\ 
 \log C=\frac{11}{12}+\frac{1}{2e}+\frac{1}{2}\log (2\pi )+1+1/s$,
where $0<s\leq 1$.

\item[(5)] $\varphi (n,k)=\left(\frac{n}{k}\right)^{km}(1+\log (n/k))^{mk}.$

\end{itemize}
\end{exa}
\bigskip

\subsection{\bf  Kolmogorov-Landau norms.}
\bigskip

Let $q(P)=||P||$ be a norm in $\mathbb{P}(\mathbb{C}^N)$. 

Put $$M(n,k)=M_q(n,k):=\sup\{ ||D^{\alpha}P||:\ |\alpha|=k,\ k\leq  \deg P\leq n,\ ||P||=1\} .$$

\begin{defin} \ 
\begin{itemize}

\item[(1)] $q\in \mathcal{KL}$ if $M_q(n,k)\in \mathcal{KL}$.
\medskip

\item[(2)] $q\in \mathcal{KL}_*$ if there exists $\varphi (n,k)\in \mathcal{KL}$ such that $M_q(n,k)\leq \varphi (n,k)$ ($\varphi (n,k)$ will be called $\mathcal{KL}$ {\it  majorant}). 
\end{itemize}

\end{defin}
\bigskip

\begin{exa}\ 
\begin{itemize}

\item[(1)] Let $E=\bar{\mathbb{D}}$, $q(P)=||P||_E$. There is well known that $$M_q(n,k)=n(n-1)\cdots (n-k+1)=k!\cdot \binom{n}{k}.$$ By the Example 1.3 (3) we get $q\in \mathcal{KL}$.
\medskip

\item[(2)] Consider $E=[-1,1]$, $q(P)=||P||_E$. The famous Vladimir Markov inequality gives 
$$M_q(n,k)=\frac{(n(n-1)\cdot (n-k+1))^2}{1\cdot 3\cdot (2k-1)}\leq k!\binom{n}{k}^2.$$ Hence, by the Example 1.3 (3) we get $q\in \mathcal{KL}_*$. One can also check that $\varphi (n,k)=\frac{(k!)^3}{(2k)!}\in \mathcal{KL}$, which gives $q\in \mathcal{KL}$. Applying recent result by G. Sroka \cite{Sroka} one can check that $q_p\in \mathcal{KL}_*$, where $q_p(P)=\left(\frac{1}{2}\int_E|P(x)|^pdx\right)^{1/p},\ p\geq 1$.
\medskip

\item[(3)] Let $q(P)=||P||=\sum\limits_{j=0}^\infty \left(\frac{1}{j!}\right)^m|P^{(j)}(0)|\tau^j,\ \tau >0,\ m\geq 0$. 

If $P(x)=a_0+a_1x+\dots +a_nx^n$ then $||P||=\sum\limits_{j=0}^\infty \left(\frac{1}{j!}\right)^{m-1}|a_j|\tau^j .$

Put $S_{q,n}:=\{ P\in\mathbb{P}_n(\mathbb{C}):\ ||P||\leq 1\}$ - this is a convex symmetric body in finite dimensional vector space $\mathbb{P}_n(\mathbb{C})$.
Then one can calculate that

$$\text{\rm extr}(S_{q,n})=\{ P_j(x)=\zeta_j(j!)^{m-1}\tau^{-j}x^j:\ |\zeta_j|=1,\ j=0,\dots ,n\}.$$
Hence $\sup\{ ||P^{(k)}||:\ P\in S_{q,n}\}=\max\{ ||P_j^{(k)}||:\ j=1,\dots, n\}$. 

Since $||x^l||=(1/l!)^{m-1}\tau^l$, we get 
$$||P_j^{(k)}||=\tau^{-k}(j(j-1)\cdots (j-k+1))^m.$$ Hence
$$M_q(n,k)=\max\limits_{k\leq j\leq n}||P_j^{(k)}||=\tau^{-k}(n(n-1)\cdots (n-k+1))^m.$$ Consequently $q\in \mathcal{KL}$.

\medskip

\item[(4)] If $E\subset\mathbb{C}^N$ is a Bernstein set ($||D^\alpha P||_E\leq B^{|\alpha |}(\deg P)^{|\alpha |}||P||_E$) then $q=||\cdot ||_E\in \mathcal{KL} $.
\medskip

\item[(5)] If $E$ is a compact subset of $\mathbb{C}$ then $q(P)=\sum\limits_{j=1}^\infty \frac{1}{j!}||P^{(j)}||_E\tau^j\in \mathcal{KL}$.

\item[(6)] If $q\in \mathcal{AM}$ ($M_q(n,k)\leq B^{k}n^{km}$) or $q\in\mathcal{VM}$ ($M_q(n,k)\leq B^kn^{km}/(k!)^{m-1}$) then $q\in \mathcal{KL}_*$.

\end{itemize}

\end{exa}
\bigskip

Let us formulate the main results of this paper.
\bigskip

\begin{thm}

If $q\in \mathcal{KL}$ then two conditions are equivalent

\begin{itemize}

\item[(1)] $q\in \mathcal{VM}$;

\item[(2)] $q\in \mathcal{AM},\ M(n,n)\leq A^n n!$.
\end{itemize}
\end{thm}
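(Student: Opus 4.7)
The plan is to verify $(1) \Rightarrow (2)$ directly from the definition of $\mathcal{VM}$. Substituting $k=1$ into $M(n,k) \leq M^k n^{km}/(k!)^{m-1}$ yields $M(n,1) \leq Mn^m$, so $q \in \mathcal{AM}(m,M)$. Substituting $k=n$ and applying the weak Stirling bound $n^n \leq e^n n!$ gives
\[
M(n,n) \leq M^n \frac{n^{nm}}{(n!)^{m-1}} \leq M^n \frac{e^{nm}(n!)^m}{(n!)^{m-1}} = (Me^m)^n n!,
\]
so $A = Me^m$ works.

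For the substantive direction $(2) \Rightarrow (1)$, the plan is to feed the two endpoint bounds into the $\mathcal{KL}$ inequality written in terms of $\psi(n,k) = M(n,k)^{1/k}$:
\[
\log \psi(n,k) \leq \log C + \left(1 - \frac{\log k}{\log n}\right)\log \psi(n,1) + \frac{\log k}{\log n}\log \psi(n,n).
\]
By hypothesis, $\log \psi(n,1) = \log M(n,1) \leq \log M + m\log n$, and from $M(n,n) \leq A^n n! \leq A^n n^n$ we obtain $\log \psi(n,n) \leq \log A + \log n$. Setting $\beta = \log k/\log n \in [0,1]$, the coefficient of $\log n$ collapses to $(1-\beta)m + \beta = m - (m-1)\beta$, and $(m-1)\beta \log n = (m-1)\log k$, so
\[
\log \psi(n,k) \leq \log(CMA) + m\log n - (m-1)\log k,
\]
equivalently $M(n,k)^{1/k} \leq CMA \cdot n^m k^{-(m-1)}$.

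Raising to the $k$-th power and using the elementary bound $k^k \geq k!$ (so that $k^{-(m-1)k} \leq (k!)^{-(m-1)}$ whenever $m \geq 1$) yields
\[
M(n,k) \leq (CMA)^k \, \frac{n^{mk}}{(k!)^{m-1}},
\]
which is exactly $q \in \mathcal{VM}(m, CMA)$.

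I expect no serious obstacle: the entire argument is algebraic once the $\mathcal{KL}$ inequality is recognized as logarithmic convex interpolation between $\psi(n,1)$ and $\psi(n,n)$ in the variable $\log k/\log n$. The conceptual content is that the profile $n^m/k^{m-1}$ characteristic of V. Markov's inequality is precisely what such interpolation produces when $\psi(n,1)$ has the A. Markov profile $n^m$ and $\psi(n,n)$ has the (near-maximal) profile $n$. Two minor bookkeeping points worth flagging: both Markov exponents $m$ coincide on the two sides of the equivalence, with only the multiplicative constant changing to $M' = CMA$; and the estimate $(n!)^{1/n} \leq n$ we use to bound $\log\psi(n,n)$ is sharp enough for the interpolation to close up exactly at the correct $k^{-(m-1)}$ profile (a finer Stirling bound would only improve the multiplicative constant).
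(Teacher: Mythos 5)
Your argument is correct and is essentially the intended one: the paper states this theorem without proof, but the abstract and the surrounding discussion make clear that the point of the $\mathcal{KL}$ condition is exactly the logarithmic interpolation between $\psi(n,1)\le Mn^m$ and $\psi(n,n)\le A(n!)^{1/n}\le An$ that you carry out, together with $k^k\ge k!$ to recover the $(k!)^{-(m-1)}$ factor, and the converse direction via $n^n\le e^n n!$. The only cosmetic point is that absorbing $(1-\beta)\log M+\beta\log A$ into $\log(MA)$ tacitly assumes $M,A\ge 1$, which is harmless since the constants may always be enlarged.
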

\bigskip

\begin{thm} Two conditions are equivalent 
\begin{itemize}
\item[(1)] $q\in \mathcal{VM}$;

\item[(2)] there exists $\varphi (n,k)$  a $\mathcal{KL}$ majorant such that $\varphi (n,1)\leq An^\alpha,\ \varphi (n,n)\leq B^nn!$.
\end{itemize}
\end{thm}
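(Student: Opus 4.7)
The plan is to prove both implications by matching parameters between the $\mathcal{VM}$ description and the $\mathcal{KL}$ majorant description, using Stirling's asymptotics to pass between factorials and powers of $k$ and $n$.

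For $(1)\Rightarrow (2)$, I would take $\varphi(n,k):=M^k n^{km}/(k!)^{m-1}$, which by definition of $\mathcal{VM}(m,M)$ majorizes $M_q(n,k)$. The endpoint conditions are immediate: $\varphi(n,1)=Mn^m$ gives $A=M$, $\alpha=m$, while Stirling's bound $n^n\leq e^n n!$ yields $n^{nm}/(n!)^m\leq e^{nm}$, hence
$$\varphi(n,n)=n!\cdot M^n\cdot \frac{n^{nm}}{(n!)^m}\leq (Me^m)^n n!.$$
To verify $\varphi\in\mathcal{KL}$, writing $\psi(n,k)=\varphi(n,k)^{1/k}=Mn^m/(k!)^{(m-1)/k}$, the $\mathcal{KL}$ inequality reduces after cancellation of the common $\log M+m\log n$ contribution to
$$(m-1)\left[\frac{\log k}{n\log n}\log n!-\frac{\log k!}{k}\right]\leq \log C,$$
and Stirling $\log j!=j\log j-j+O(\log j)$ shows the bracket equals $1-\log k/\log n+O(1/\min(k,n))$, which is uniformly bounded for $2\leq k\leq n$.

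For $(2)\Rightarrow (1)$, I would start from a $\mathcal{KL}$ majorant $\varphi\geq M_q$ with $\varphi(n,1)\leq An^\alpha$ and $\varphi(n,n)\leq B^n n!$. Then $\log\psi(n,1)\leq\log A+\alpha\log n$ and $\log\psi(n,n)=(\log\varphi(n,n))/n\leq\log B+\log n$ by the trivial bound $n!\leq n^n$. Substituting into the $\mathcal{KL}$ interpolation for $\log\psi(n,k)$ and using $(\log k/\log n)\log n=\log k$, I obtain
$$\log\psi(n,k)\leq\log(CA)+\alpha\log n+(1-\alpha)\log k+\frac{\log k}{\log n}(\log B-\log A),$$
and the last term is bounded by $|\log B-\log A|$ since $\log k/\log n\in[0,1]$. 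Exponentiating and raising to the $k$-th power,
$$M_q(n,k)\leq\varphi(n,k)\leq D^k n^{k\alpha}k^{k(1-\alpha)}$$
for a constant $D$ absorbing all the additive log-constants. Finally, $k^k\geq k!$ (Stirling again) gives, for $\alpha\geq 1$, the inequality $k^{k(1-\alpha)}=k^{-k(\alpha-1)}\leq (k!)^{-(\alpha-1)}$, whence $M_q(n,k)\leq D^k n^{k\alpha}/(k!)^{\alpha-1}$, i.e., $q\in\mathcal{VM}(\alpha,D)$. The case $\alpha<1$ is not genuinely separate, because one may always replace $\alpha$ by $\max(\alpha,1)$ without affecting the assumption.

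The main obstacle is the uniform bookkeeping of Stirling asymptotics on the whole triangle $1\leq k\leq n$: both endpoints $k=1$ and $k=n$ as well as all interior $k$ must be controlled by the same constants. This works because the $\mathcal{KL}$ condition and the $\mathcal{VM}$ inequality are both stated on the same logarithmic scale, so the factorial terms $\log k!=k\log k-k+O(\log k)$ line up with the linear-in-$\log k$ interpolation weights of the $\mathcal{KL}$ inequality, and the remainders are absorbed into the multiplicative constants $A,B,C,D$. The computation of the KL-remainder $1-\log k/\log n+O(1/\min(k,n))$ is the one place where care with the regime $k\approx n$ versus $k$ small is required.
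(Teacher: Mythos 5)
Your argument is correct. Note that the paper states this theorem as one of its ``main results'' but does not actually supply a proof in the text, so there is nothing to compare against; your write-up is a sound reconstruction and is the natural argument given the definitions. For $(1)\Rightarrow(2)$ the reduction of the $\mathcal{KL}$ inequality for $\varphi(n,k)=M^kn^{km}/(k!)^{m-1}$ to the boundedness of $(m-1)\bigl[\tfrac{\log k}{n\log n}\log n!-\tfrac{1}{k}\log k!\bigr]$ is exactly right, and Stirling gives the bracket as $1-\tfrac{\log k}{\log n}+O(\tfrac{\log k}{k})+O(\tfrac{\log n}{n})$ --- your error term $O(1/\min(k,n))$ is slightly misstated (it should be $O(\log k/k)$), but uniform boundedness, which is all you need, still holds; the sign $m-1\geq 0$ from the paper's convention $m\geq 1$ is what lets you conclude. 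For $(2)\Rightarrow(1)$ the one point worth making explicit is that you may substitute the upper bounds for $\log\psi(n,1)$ and $\log\psi(n,n)$ into the $\mathcal{KL}$ interpolation only because both weights $1-\tfrac{\log k}{\log n}$ and $\tfrac{\log k}{\log n}$ are nonnegative on $1\leq k\leq n$; you use this implicitly and it is true, but it deserves a sentence. The identity $\tfrac{\log k}{\log n}\log n=\log k$, the bound $\tfrac{1}{n}\log n!\leq\log n$, the absorption of $\tfrac{\log k}{\log n}(\log B-\log A)$ into a constant, and the final passage $k^{k(1-\alpha)}\leq(k!)^{1-\alpha}$ for $\alpha\geq 1$ (after the harmless replacement $\alpha\mapsto\max(\alpha,1)$) are all correct and yield $q\in\mathcal{VM}(\alpha,D)$ as claimed.
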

\bigskip

\begin{rem} If $q$ is a norm in $\mathbb{C}^N$ then we conjecture $q\in \mathcal{KL}$.
\end{rem}
\bigskip

If the above conjecture is true, then applying \cite{Ada} we get the following.

\begin{cor} If $E\subset\mathbb{C}$ then $E\in\mathcal{AM}
\Leftrightarrow\ E\in \mathcal{VM}$.
\end{cor}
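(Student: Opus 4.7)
The plan is to combine the assumed conjecture with the preceding $\mathcal{KL}$-version of the main theorem and with a standard capacity-theoretic lower bound on Chebyshev numbers. First I would dispose of the trivial direction $\mathcal{VM}\Rightarrow\mathcal{AM}$ by taking $k=1$ in the V.~Markov inequality. For the converse, given $E\in\mathcal{AM}$ I set $q(P)=\|P\|_E$; the conjectured property yields $q\in\mathcal{KL}$, so the preceding theorem reduces the problem to producing constants $A,B>0$ such that $M_q(n,1)\le An$ (which is precisely the hypothesis $E\in\mathcal{AM}$) and $M_q(n,n)\le B^n n!$.

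For the second bound I would use the following elementary observation: if $P\in\mathbb{P}_n(\mathbb{C})$ has leading coefficient $a_n$, then $P^{(n)}\equiv n!\,a_n$ is constant, so $\|P^{(n)}\|_E=n!|a_n|$. If moreover $\|P\|_E\le 1$ and $a_n\neq 0$, then $a_n^{-1}P$ is a monic polynomial of degree $n$, hence $\|a_n^{-1}P\|_E\ge t_n(E)$, where $t_n(E)$ is the $n$-th Chebyshev number of $E$. Thus $|a_n|\le 1/t_n(E)$, and
$$M_q(n,n)\le \frac{n!}{t_n(E)}.$$
By the result of \cite{Ada}, any Markov compact set in $\mathbb{C}$ is non-polar, so the logarithmic capacity $\mathrm{cap}(E)$ is positive. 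Since the sequence $t_n(E)^{1/n}$ is decreasing and converges to $\mathrm{cap}(E)$ (classical Fekete), one has $t_n(E)\ge\mathrm{cap}(E)^n$, and therefore $M_q(n,n)\le\mathrm{cap}(E)^{-n}n!$, which is the required bound with $B=1/\mathrm{cap}(E)$. Both hypotheses (1) and (2) of the $\mathcal{KL}$-theorem being satisfied, one concludes $q\in\mathcal{VM}$, i.e.\ $E\in\mathcal{VM}$.

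The main obstacle is not in this deduction, which is essentially a clean assembly of the $\mathcal{KL}$-theorem with well-known facts from potential theory, but in the still-conjectural assumption $q\in\mathcal{KL}$ for an arbitrary polynomial norm. In fact, for the present corollary one would only need the weaker statement that $\|\cdot\|_E\in\mathcal{KL}$ for every compact $E\subset\mathbb{C}$; thus, from the point of view of this section, proving the corollary unconditionally reduces to verifying the one-variable supremum-norm case of the conjecture. Everything else, including the step through non-polarity and the Chebyshev estimate, is standard.
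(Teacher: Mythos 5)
Your argument is correct and follows precisely the route the paper intends (the paper itself only gestures at it with ``applying \cite{Ada}''): non-polarity of one-dimensional Markov sets gives positive capacity, the Chebyshev-number bound $M_q(n,n)\le n!/t_n(E)\le n!\,\mathrm{cap}(E)^{-n}$ supplies the hypothesis $M(n,n)\le B^n n!$, and the conditional $\mathcal{KL}$ theorem then yields $E\in\mathcal{VM}$. The only blemish is the parenthetical claim that $E\in\mathcal{AM}$ means $M_q(n,1)\le An$; it means $M_q(n,1)\le An^m$ for some $m\ge 1$, but this is harmless since the theorem you invoke only requires $q\in\mathcal{AM}$ (respectively $\varphi(n,1)\le An^{\alpha}$).
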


\begin{rem} Let $q$ be the norm in Example 6.6 (3) with $m>1$. Then $q\in \mathcal{KL}$, $q\in \mathcal{AM}$ but $q\not\in\mathcal{VM}$ as it was proved in \cite{MirekAgnieszkaBeataPawel}.\end{rem}

\begin{rem} Let us consider the following condition: $q\in \mathcal{M}_*(a,m)$ if and only if
$$\left(\frac{M_n(q,l)}{l!}\right)^{1/l}/\left(\frac{M_n(q,k)}{k!}\right)^{1/k}\leq a\left(\frac{k}{l}\right)^m,\ \forall n\geq 1,\ 1\leq l\leq k\leq n,$$ where $m\geq 1$ is a constant. In particular, if $k=n$, we obtain a condition
$$M_n(q,l)\leq a^ll!\left(\frac{n}{l}\right)^{lm}\left(M_n(q,n)^{1/n}/n!^{1/n}\right)^l.$$
Hence, if $M_n(q,n)\leq b^nn!$, we obtain V. Markov's inequality
$$M_n(q,l)\leq (a(q)b(q))^l n^{ml}/l!^{m-1}. $$ Let us note that considered condition $\mathcal{M}_*(a,m)$ is not satisfied if $q(P)=||P||_{\mathbb{D}\cup\{z_0\}}$,\ with $|z_0|>1$. On the other hand this condition is satisfied if $q(P)$ is a norm from Example  6.6(3).
\medskip

Now we can formulate the following question: thus
$$q\in\mathcal{AM}(M,m) \Rightarrow\  q\in \mathcal{M}_*(a',m')$$ or (a weaker condition)
$$q\in\mathcal{AM}(M,m) \Rightarrow\ M_n(q,l)\leq a'^ll!\left(\frac{n}{l}\right)^{lm'}\left(M_n(q,n)^{1/n}/n!^{1/n}\right)^l?$$
\end{rem}
\bigskip

\noindent \textbf{Acknowledgement.}
The work was partially supported by the National Science Centre (NCN), Poland No. 2013/11/B/ST1/03693.

\end{document}